\documentclass[%
	,a4paper 
	,11pt 
	,english
	]{amsart}


\usepackage[english]{babel}
\usepackage[parfill, indent]{parskip} 
\usepackage{ragged2e}
\usepackage{xcolor}

\let\ORGvarphi=\varphi
\let\varphi=\phi
\let\phi=\ORGvarphi

\let\ORGvarepsilon=\varepsilon
\let\varepsilon=\epsilon
\let\epsilon=\ORGvarepsilon

\usepackage{mathtools}
\DeclarePairedDelimiterX{\norm}[1]{\lVert}{\rVert}{\ifblank{#1}{\:\cdot\:}{#1}}
\DeclarePairedDelimiterX{\abs}[1]{\lvert}{\rvert}{\ifblank{#1}{\:\cdot\:}{#1}}

\makeatletter
\gdef\resetMathstrut@{%
 \setbox\z@\hbox{%
   \mathchardef\@tempa\mathcode`\!\relax%
   \def\@tempb##1"##2##3{\the\textfont"##3\char"}%
   \expandafter\@tempb\meaning\@tempa \relax
 }%
\ht\Mathstrutbox@\ht\z@ \dp\Mathstrutbox@\dp\z@}
\makeatother
\begingroup
 \catcode`(\active \xdef({\left\string(}
 \catcode`)\active \xdef){\right\string)}
 \catcode`[\active \xdef[{\left\string[}
 \catcode`]\active \xdef]{\right\string]}
\endgroup
\mathcode`(="8000 \mathcode`)="8000
\mathcode`[="8000 \mathcode`]="8000
\DeclareRobustCommand{\{}{\ifmmode\left\lbrace\else\textbraceleft\fi}
\DeclareRobustCommand{\}}{\ifmmode\right\rbrace\else\textbraceright\fi}

\usepackage{bbm}			        
\newcommand{\1}{\mathbbm{1}}
\newcommand{\CA}{C$^{\star}$}	    
\newcommand{\BA}{\mathfrak{A}}		
\newcommand{\LH}{\mathcal{L}(H)}
\newcommand{\Kp}[1]{\mathcal{K}(#1)}
\newcommand{\Sp}[1]{\mathop{}\mathrm{Sp}(#1)\,} 
\newcommand{\supp}[1]{\operatorname{s}(#1)}
\newcommand{\N}{\mathbb{N}}		
\newcommand{\Z}{\mathbb{Z}}		
\newcommand{\C}{\mathbb{C}}		
\newcommand{\T}{\mathbb{T}}     


\theoremstyle{plain}								
\newtheorem{theorem}{Theorem}[section]
\newtheorem{proposition}[theorem]{Proposition}			
\newtheorem{lemma}[theorem]{Lemma}				
\newtheorem{corollary}[theorem]{Corollary}

\theoremstyle{definition}						
	
\newtheorem{remark}[theorem]{Remark}
\newtheorem{example}[theorem]{Example}
\newtheorem{conjecture}[theorem]{Conjecture}
\newtheorem{open_problem}[theorem]{Open Problem}
\numberwithin{equation}{section}

\usepackage[inline,shortlabels]{enumitem}

\usepackage{xspace}
	\newcommand{\eg}{e.g.\xspace}
	\newcommand{\ie}{i.e.\xspace}


\title[Positivity of inverse operators on \CA-algebras]{A note on the positivity of inverse operators acting on \CA-algebras} 
\date{\today}

\author{Jochen Glück}
\address{%
Fakultät für Mathematik und Naturwissenschaften\\
Bergische Universität Wuppertal \\
	Gaußstr. 20\\
42119 Wuppertal (Germany)
	}
\email{glueck@uni-wuppertal.de}

\author{Ulrich Groh}
\address{%
Mathematisches Institut \\
Universität Tübingen \\
Auf der Morgenstelle 10 \\
72076 Tübingen (Germany)
}
\email{ulgr@math.uni-tuebingen.de}

\keywords{Positive operator; \CA-automorphism; Jordan automorphism; unitary spectrum; Jacobs-De Leeuw-Glicksberg decomposition; doubly power bounded operator}
\subjclass[2020]{37A55, 46L40, 46L55, 47B65}
\begin{document}

\begin{abstract}
    For a positive and invertible linear operator $T$ acting on a \CA-algebra, we give necessary and sufficient criteria for the inverse operator $T^{-1}$ to be positive, too.
    Moreover, a simple counterexample shows that $T^{-1}$ need not be positive even if $T$ is unital and its spectrum is contained in the unit circle.
\end{abstract}
\maketitle
\thispagestyle{empty}
\section{Introduction}
The following conjecture was recently formulated by Fidaleo, Ottomani, and Rossi in \cite[Conjecture 5.5]{fidaleo:2022}.

\begin{conjecture}
    \label{conj:automorphism}
    Let $T$ be a completely positive linear map with $T\1 = \1$ on a unital \CA-algebra $\BA$. 
    If the spectrum $\Sp{T}$ of $T$ is contained in the unit circle $ \T = \{ \lambda \in \C \colon \abs{ \lambda } = 1 \} $, is $T$ a \CA-automorphism?
\end{conjecture}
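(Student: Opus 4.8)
The plan is to deduce the affirmative answer from two largely independent facts: first, that under the stated hypotheses the inverse $T^{-1}$ is again positive, and second, that a unital linear bijection $T$ of $\BA$ which is completely positive and whose inverse is positive must already be a \CA-automorphism. Granting the first fact, $T$ becomes a unital order isomorphism of $\BA$ onto itself, and the second fact then closes the argument; so the genuine work lies in producing positivity of $T^{-1}$.

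The second, structural step I would treat as follows. A unital positive map is automatically a contraction, and since $0 \notin \Sp{T} \subseteq \T$ the operator $T$ is invertible; if in addition $T^{-1}$ is positive, then $T$ is a unital order isomorphism, and Kadison's theorem identifies it as a Jordan \CA-automorphism of $\BA$. After passing to the bidual if necessary, a Jordan \CA-automorphism splits, via a central projection, into a genuine $*$-isomorphism and a $*$-anti-isomorphism. Complete positivity excludes the anti-isomorphic summand: a $*$-anti-isomorphism restricts on a $2 \times 2$ matrix corner to the transpose, which is positive but fails to be $2$-positive, hence cannot occur inside a completely positive map. Thus $T$ is multiplicative, \ie a \CA-automorphism. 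This part invokes complete positivity only at the very end and is essentially soft.

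The crux is therefore the positivity of $T^{-1}$. I would pass to the enveloping \WA algebra $M = \BA^{**}$, on which the bitranspose $S = T^{**}$ is a normal, unital, completely positive contraction with $\Sp{S} = \Sp{T} \subseteq \T$. Because $M = (M_*)^*$, the norm-bounded orbits $\{ S^n x \colon n \ge 0 \}$ are relatively weak-$*$ compact, so the Jacobs--De Leeuw--Glicksberg theory applies to the weak-$*$ continuous contraction semigroup $(S^n)_{n \ge 0}$ and yields the splitting of $M$ into a reversible and a stable part. The aim is to show that $S$ is doubly power bounded; once this is known, $\mathrm{id}_M$ is a point-weak-$*$ limit of a net $S^{n_\alpha}$ with $n_\alpha \to \infty$, whence $S^{-1} = \lim_\alpha S^{n_\alpha - 1}$ is a point-weak-$*$ limit of unital completely positive maps and is therefore positive; restricting to $\BA$ via $(T^{-1})^{**} = S^{-1}$ gives positivity of $T^{-1}$.

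The main obstacle lies exactly here: the condition $\Sp{T} \subseteq \T$ does \emph{not} by itself eliminate the stable part, as the bilateral shift -- a unitary with spectrum the whole circle but entirely ``flight'' behaviour -- already demonstrates. This is precisely the point at which the counterexample announced in the abstract indicates that mere positivity is too weak, and where the Kadison--Schwarz inequality furnished by (two-positivity from) complete positivity must be exploited. Concretely, I would first use the Markov--Kakutani fixed point theorem to produce a $T$-invariant state $\varphi$, pass to its GNS representation, and use the Schwarz inequality $S(x)^* S(x) \le S(x^* x)$ to control the backward orbit and to bound $\sup_{n} \norm{S^{-n}}$, thereby ruling out a nontrivial stable part and establishing double power boundedness. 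Making this estimate rigorous -- and verifying that the unimodular spectrum together with $2$-positivity genuinely forces reversibility -- is the delicate heart of the argument.
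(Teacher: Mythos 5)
Your proposal sets out to \emph{prove} the conjecture, but the paper's answer to this statement is negative: the conjecture is false, and the paper's contribution here is a counterexample (Example~\ref{exa:perturbed-shift}), not a proof. On the commutative \CA-algebra $\BA = \ell^\infty(\Z) \times \C$ the paper constructs a unital positive operator $T$ (a perturbed shift) that is doubly power bounded, hence has $\Sp{T} \subseteq \T$, yet $T^{-1}$ maps $(0,1)$ to $(-e_0,1)$ and so is not positive; in particular $T$ is not a \CA-automorphism. Crucially, since the algebra is commutative, positivity already implies \emph{complete} positivity, so this $T$ satisfies every hypothesis of the conjecture. This destroys the strategy you describe as the ``delicate heart'' of your argument: the Kadison--Schwarz inequality coming from $2$-positivity cannot rule out the stable part, because in the commutative counterexample complete positivity carries no information beyond positivity, and the operator there is not merely of unimodular spectrum but even doubly power bounded. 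No amount of rigor can close that gap; the implication you are after simply fails without further hypotheses (the paper recovers it only under extra assumptions, e.g.\ that $T$ is a bijective isometry, Theorem~\ref{thm:isometry}, or that $T$ is doubly power bounded \emph{and} admits a faithful subinvariant normal functional, Theorems~\ref{thm:trace-class} and~\ref{thm:atomic-class}).

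There is also a technical error en route, independent of the falsity of the target. You invoke the Jacobs--de Leeuw--Glicksberg decomposition on $M = \BA^{**}$ from relative weak-$*$ compactness of bounded orbits. JdLG theory requires relative compactness of orbits in the \emph{weak} topology $\sigma(E,E^*)$ (or, in the paper's version, in norm); weak-$*$ compactness of bounded sets in a dual space is not enough, since multiplication in the resulting compact right-topological semigroup is not jointly continuous and the splitting theory breaks down. Moreover, even where JdLG applies, double power boundedness does not force the stable part to vanish when convergence is only weak: the bilateral shift, which you yourself mention, is a doubly power bounded unitary whose every orbit tends weakly to $0$. The paper avoids both problems by working on the predual and using \emph{norm} compactness of order intervals $[0,b]$ (Lemmas~\ref{lem:order-interval-compact} and~\ref{lem:trace-class-order-intervals}), which is exactly where the additional hypothesis of a strictly positive subinvariant trace-class operator enters -- an ingredient unavailable under the conjecture's bare assumptions, and absent in the counterexample.
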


It follows for instance from \cite[Theorem~2.10]{bhat:2023} that the conjecture holds in finite dimensions. 
The present note serves the following purposes.

\begin{enumerate}[(i)]
    \item 
    We show by a simple counterexample that Conjecture~\ref{conj:automorphism} does not hold in general on infinite-dimensional \CA-algebras, not even in the commutative case (Example~\ref{exa:perturbed-shift}).

    \item 
    We give a short and very simple proof of Conjecture~\ref{conj:automorphism} in the finite-dimensional case which is self-contained except for the usage of elementary spectral theory. 

    \item 
    We observe that the conjecture is true if the assumption $\Sp{T} \subseteq \T$ is replaced by the stronger assumption that $T$ is a bijective isometry (Proposition~\ref{prop:isometry-unital}). 
    More interestingly, this remains true without the assumption that $T\1 = \1$ (Theorem~\ref{thm:isometry}).

    \item 
    We show that the conjecture is true if $T$ is doubly power bounded (which is stronger than $\Sp{T} \subseteq \T$ but weaker than $T$ being a bijective isometry) in the case where $\BA = \LH$ and $T$ has a pre-adjoint $T_{*}$ with a strictly positive (sub)fixed vector in the space of trace class operators (Theorem~\ref{thm:trace-class}). 
    More generally, an analogous result holds for operators acting on atomic von Neumann algebras (Theorem~\ref{thm:atomic-class}).
\end{enumerate}

The proof of Theorems~\ref{thm:trace-class} and~\ref{thm:atomic-class} is based on the so-called \emph{Jacobs-de Leeuw-Glicksberg decomposition} from operator theory, which we briefly recall in Section~\ref{sec:jdlg}. 
While our results in Section~\ref{sec:ap-operators-on-c-star-algebras} are special cases of those in Section~\ref{sec:atomic}, we prefer to spell them out explicitly and with different proofs which we believe can be insightful for readers who are mainly interested in the situation on $\LH$.

For the general theory of operator algebras we refer to Blackadar \cite{blackadar:2006} and for the theory of positive operators on operator algebras to St{\o}rmer \cite{stoermer-in:1973, stoermer:2013}. 
In Section~\ref{sec:positive-operators} we recall some terminology and a few simple results about positive operators acting on \CA-algebras (inclu\-ding some proofs) to provide additional context for the results in the latter sections.

\section{Positive Operators on \CA-algebras}
    \label{sec:positive-operators}

We are interested in the following three classes of bounded linear operators acting on a \CA-algebra $\BA$:

\begin{enumerate}[(i)]
    \item
    the positive operators, \ie operators that map the positive cone $\BA_{+}$ into $\BA_{+}$; 

    \item
    the Schwarz operators, \ie the operators satisfying
    \[
        T(x)^{*} T(x) \leq \norm{T} T(x^{*} x) \quad \text{for all } x \in \BA; 
    \]
    
    \item
    and the $n$-positive operators for $n \geq 2$, including the completely positive operators.
\end{enumerate}

It is well-known that (iii) implies (ii) and (ii) implies (i). 
If (and only if) the \CA-algebra is commutative, then positivity implies complete positivity and hence all three properties (i)--(iii) are equivalent (see \cite[Theorem~1.2.5]{stoermer:2013} for the details).
A positive operator $T$ on a \CA-algebra $\BA$ satisfies the \emph{Kadison inequality}
\begin{align}
    \label{eq:kadison}
    T(a)^{2} \leq \norm{ T } \, T( a^{2} ) \quad \text{for all } a \in \BA_{ h },
\end{align}
where $\BA_{h}$ denotes the self-adjoint part of $\BA$; see \cite[Theorem~1.3.1(iii)]{stoermer:2013}.

If $\BA$ is a \CA-algebra and $x$, $y \in \BA_{h}$, then
\[
    x \circ y \coloneqq \frac{1}{2} (x y + y x)
\]
is a  (non associative) product on $\BA_{ h }$, called the \emph{Jordan product}. 
A bounded linear operator $T \colon  \BA \to \BA$ that satisfies $T(x \circ y) = (Tx) \circ (Ty)$ for all $x$, $y  \in \BA_{h}$ is called a \emph{Jordan homomorphism}. 
The operator $T$ is called a \emph{Jordan automorphism} if $T$ is a bijective Jordan homomorphism (note that this implies that $T^{-1}$ is a Jordan homomorphism, too).
Every Jordan homomorphism is positive since the positive elements in $\BA_{h}$ are precisely those elements that have a square root in $\BA_{h}$.
Hence, if $T$ is a Jordan automorphism, then both $T$ and $T^{-1}$ are positive. 
In the following proposition we recall a partial converse to this observation.

\begin{proposition}
    \label{prop:posop-jordan}
    Let $T \colon  \BA \to \BA$ be a bijective linear operator on a \CA-algebra $\BA$. 
    If both $T$ and $T^{-1}$ are contractive and positive, then $T$ is a Jordan automorphism.
\end{proposition}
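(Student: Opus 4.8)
The plan is to show that the two-sided contractivity forces $T$ to preserve squares on the self-adjoint part, \ie $T(a^{2}) = T(a)^{2}$ for all $a \in \BA_{h}$, and then to recover the Jordan identity by polarization. First I would apply the Kadison inequality~\eqref{eq:kadison} to $T$ itself. For $a \in \BA_{h}$ we have $a^{2} \geq 0$, hence $T(a^{2}) \geq 0$ by positivity; since $T$ is contractive, the estimate $T(a)^{2} \leq \norm{T}\, T(a^{2})$ then improves to $T(a)^{2} \leq T(a^{2})$ for every $a \in \BA_{h}$.

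The decisive step is to run the same argument for $T^{-1}$, which by hypothesis is again contractive and positive. Applying~\eqref{eq:kadison} to $T^{-1}$ at the self-adjoint element $b = T(a)$ gives $a^{2} = T^{-1}(b)^{2} \leq T^{-1}(b^{2}) = T^{-1}(T(a)^{2})$. Now I would apply the positive, hence order-preserving, operator $T$ to both sides to obtain $T(a^{2}) \leq T\bigl(T^{-1}(T(a)^{2})\bigr) = T(a)^{2}$. Together with the first inequality this forces the equality $T(a^{2}) = T(a)^{2}$ for all $a \in \BA_{h}$.

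To finish, observe that $T$ maps $\BA_{h}$ into $\BA_{h}$, because a positive operator sends self-adjoint elements — being differences of positive elements — to self-adjoint ones. Hence for $x, y \in \BA_{h}$ the polarization identity $x \circ y = \tfrac{1}{2}( (x+y)^{2} - x^{2} - y^{2} )$, the linearity of $T$, and the square-preserving property combine to give $T(x \circ y) = (Tx) \circ (Ty)$; thus $T$ is a Jordan homomorphism, and being bijective it is a Jordan automorphism. I do not anticipate a serious obstacle: the proof is a short, symmetric use of the Kadison inequality for $T$ and for $T^{-1}$. The only points needing care are the justification that contractivity lets one discard the norm factor (which relies on positivity of $T(a^{2})$) and the bookkeeping that $T$ preserves $\BA_{h}$, so that the polarization step stays within the self-adjoint part.
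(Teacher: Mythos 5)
Your proof is correct and takes essentially the same route as the paper's: both arguments apply the Kadison inequality once to $T$ and once to $T^{-1}$ (discarding the norm factor via contractivity and positivity), transport one inequality through the other operator by positivity to get $T(a^2) = T(a)^2$ for $a \in \BA_h$, and then polarize. The only cosmetic difference is that the paper chains everything into the single estimate $a^2 \leq T^{-1}\bigl((Ta)^2\bigr) \leq a^2$, whereas you derive the two one-sided inequalities separately.
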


\begin{proof} 
    We first show that 
    $
        T( a )^{ 2 } = T( a^{ 2 } ) 
    $
    for all $a \in \BA_{h}$.
    Indeed, let $a \in \BA_{h}$. 
    Since $\norm{T} \leq 1 $ and $\norm{T^{-1}} \leq 1$, employing the Kadison inequality~\eqref{eq:kadison} twice gives 
    \begin{align*}
        a^{ 2 } 
        & = 
        ( T^{ -1 }( Ta ) ) \cdot ( T^{ -1 }( Ta ) ) 
        \\
        & \leq 
        T^{ -1 }( ( Ta )^{ 2 } )  
        \leq 
        ( T^{ -1 } \circ T ) a^{ 2 } = a^{ 2 } 
    \end{align*}
    and therefore $T ( a^{ 2 } ) = T( a )^{ 2 } $. 
    This implies the claim since 
    \[
        2 x \circ y 
        = 
        ( x + y )^{ 2 } - x^{ 2 } - y^{ 2 }
    \]
    for all $x$ ,$y \in \BA_{h}$.
\end{proof}

If $T$ and $T^{-1}$ are even Schwarz operators, then the previous proposition can be strengthened.

\begin{proposition}
    \label{prop:schwarz-star-automorphism}
    Let $T \colon  \BA \to \BA$ be a bijective linear operator on a \CA-algebra $\BA$. 
    If both $T$ and $T^{-1}$ are contractive Schwarz operators, then 
    \[
        T (x)^{*}T(y) = T(x^{*} y) 
    \]
    for all $x$, $y  \in \BA$, \ie,  $T$ is a \CA-automorphism.
\end{proposition}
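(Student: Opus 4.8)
The plan is to mimic the proof of Proposition~\ref{prop:posop-jordan}, but now exploit the full Schwarz inequality (which controls products $x^{*}y$, not just squares of self-adjoint elements) so as to recover multiplicativity rather than merely the Jordan identity. The Schwarz inequality for a contraction $T$ reads $T(x)^{*}T(x) \leq T(x^{*}x)$ for all $x \in \BA$, and the same holds for $T^{-1}$. First I would show that $T$ preserves the \emph{modulus form}, \ie that $T(x)^{*}T(x) = T(x^{*}x)$ for every $x \in \BA$. This is the exact analogue of the computation $T^{-1}(Ta)\cdot T^{-1}(Ta) \le T^{-1}((Ta)^{2}) \le a^{2}$, but applied to a general $x$ in place of a self-adjoint $a$:
\begin{align*}
    x^{*}x
    &=
    T^{-1}(Tx)^{*}\, T^{-1}(Tx)
    \leq
    T^{-1}\bigl( T(x)^{*}T(x) \bigr)
    \leq
    T^{-1}\bigl( T(x^{*}x) \bigr)
    = x^{*}x,
\end{align*}
where the first inequality is the Schwarz inequality for $T^{-1}$ applied to $Tx$, and the second uses the Schwarz inequality for $T$ together with positivity of $T^{-1}$ (which follows since Schwarz operators are positive). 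Since the two ends coincide, every inequality is an equality; applying the injective positive map $T$ to the outer equality then yields $T(x)^{*}T(x) = T(x^{*}x)$ for all $x \in \BA$.

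Next I would upgrade this \emph{diagonal} identity to the full \emph{polarized} identity $T(x)^{*}T(y) = T(x^{*}y)$. The natural device is polarization: one writes $x^{*}y$ as a complex-linear combination of terms of the form $(x+i^{k}y)^{*}(x+i^{k}y)$, $k=0,1,2,3$, and uses the already-established diagonal identity on each summand. Concretely, $4\, x^{*}y = \sum_{k=0}^{3} i^{k}\,(x + i^{k}y)^{*}(x + i^{k}y)$; applying $T$ (which is linear) to the right-hand side and using $T\bigl((x+i^{k}y)^{*}(x+i^{k}y)\bigr) = T(x+i^{k}y)^{*}\,T(x+i^{k}y)$ for each $k$, the same polarization identity \emph{in the target algebra} reassembles the four terms into $4\,T(x)^{*}T(y)$. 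This gives $T(x^{*}y) = T(x)^{*}T(y)$ for all $x,y \in \BA$.

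Finally, this product-with-involution identity is exactly the statement that $T$ is a \CA-homomorphism: taking $x = \1$ (or working through adjoints directly) shows $T$ respects both the multiplication and the $*$-operation, and since $T$ is bijective it is a \CA-isomorphism of $\BA$ onto itself, \ie a \CA-automorphism. I expect the only genuinely delicate point to be the very first step, namely justifying that equality propagates through the chain $x^{*}x \le \cdots \le x^{*}x$: one must be careful that $T^{-1}$ is positive (hence order preserving) so that the sandwiched inequalities collapse, and that $T$ is injective so that equality after applying $T$ can be read back as the desired identity. Once the diagonal identity is secured, polarization is purely formal and carries no real difficulty.
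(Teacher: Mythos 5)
Your proposal is correct and takes essentially the same route as the paper: the identical sandwich argument $x^{*}x \le T^{-1}\bigl(T(x)^{*}T(x)\bigr) \le (T^{-1}\circ T)(x^{*}x) = x^{*}x$, using the Schwarz inequality first for $T^{-1}$ and then for $T$ together with positivity of $T^{-1}$, followed by the same polarization step. The only (harmless) slip is cosmetic: as written, $\sum_{k=0}^{3} i^{k}\,(x+i^{k}y)^{*}(x+i^{k}y)$ equals $4\,y^{*}x$ rather than $4\,x^{*}y$, which does not matter since the resulting identity is quantified over all pairs $x,y$.
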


\begin{proof}
    For $T$ being Schwarz operator, we obtain for all $ x \in \BA $ 
    \begin{align*}
        x^* x 
        & = 
        ( T^{ -1 } ( T x ) )^{ * } \cdot ( T^{ -1 } ( T x ) ) 
        \\
        & \leq 
        T^{ -1 } \big( ( Tx )^{ * } ( Tx ) \big) 
        \leq 
        (T^{ -1 } \circ T ) ( x^{ * } x ) 
	= 
        x^* x,
    \end{align*}
    and hence
    $
        T(x^{*} x) = T (x)^{*}T(x)
    $.
    Using the polarization identity
    \[
        4 y^{*}x = \sum_{k=0}^{3} \text{i}^{k} ( x + \text{i}^{k} y)^{*}( x + \text{i}^{k} y)
    \]
    we thus obtain $T (y^{*} x ) = T (y)^{*} T(x) $ for all $x$, $y \in \BA$, as claimed.
\end{proof}

We conclude this section by recalling the following classical result; 
a proof can be found in \cite[Proposition~II.6.9.4]{blackadar:2006}.

\begin{proposition}
    \label{prop:positivity-by-one}
    Let $T \colon  \BA \to \BA$ be a bounded unital linear map on a unital \CA-algebra $\BA$. 
    Then $T$ is positive if and only if $T$ is a contraction.
\end{proposition}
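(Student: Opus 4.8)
The statement is an equivalence, so I would prove the two implications separately, after first recording the elementary fact that a positive linear map is automatically bounded and \emph{self-adjoint} in the sense that $T(x^{*}) = T(x)^{*}$; in particular $T$ maps $\BA_{h}$ into $\BA_{h}$. Both implications hinge on the interplay between the order structure and the continuous functional calculus for self-adjoint elements, namely that for $a \in \BA_{h}$ one has $-\norm{a}\,\1 \leq a \leq \norm{a}\,\1$, and conversely that $\norm{a - \lambda\1} \leq r$ forces $\Sp{a} \subseteq \{z \in \C \colon \abs{z-\lambda} \leq r\}$.

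For the implication that a positive unital $T$ is contractive, I would first treat self-adjoint arguments. If $a \in \BA_{h}$ with $\norm{a} \leq 1$, then $\1 \pm a \geq 0$, so positivity and $T\1 = \1$ give $\1 \pm T(a) \geq 0$; since $T(a)$ is self-adjoint this yields $-\1 \leq T(a) \leq \1$ and hence $\norm{T(a)} \leq 1$. The remaining, and genuinely harder, task is to pass from self-adjoint elements to an arbitrary $x \in \BA$, where the image $T(x)$ need not be normal. For a faithful representation $\BA \hookrightarrow \LH$ and a unit vector $\xi$, the functional $\phi(\cdot) = \langle T(\cdot)\xi, \xi\rangle$ is a state, and the Cauchy--Schwarz inequality $\abs{\phi(x)}^{2} \leq \phi(\1)\,\phi(x^{*}x) \leq \norm{x}^{2}$ bounds the numerical radius of $T(x)$ by $\norm{x}$. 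I expect this to be the main obstacle: the numerical radius controls the norm only up to a factor of $2$, and removing this factor is exactly the point where the centrality of $T\1 = \1$ must enter. This is the classical identity $\norm{T} = \norm{T\1}$ for positive maps, which I would invoke (as recorded in the cited reference) to conclude $\norm{T} = 1$.

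For the converse, suppose $T$ is a unital contraction. I would first check that $T$ preserves self-adjointness: writing $T(a) = b + \mathrm{i}c$ with $b, c \in \BA_{h}$ for $a \in \BA_{h}$, the estimate $\norm{\1 + \mathrm{i}s\,T(a)} = \norm{T(\1 + \mathrm{i}sa)} \leq \norm{\1 + \mathrm{i}sa} = (1 + s^{2}\norm{a}^{2})^{1/2}$ for all real $s$, tested against states and expanded to first order in $s$, forces $c = 0$. Once $T$ is known to map $\BA_{h}$ into $\BA_{h}$, positivity follows from the spectral ball characterization: if $a \in \BA_{+}$ then, after normalizing $\norm{a} \leq 1$, we have $\Sp{a} \subseteq [0,1]$ and hence $\norm{a - \tfrac{1}{2}\1} \leq \tfrac{1}{2}$; contractivity together with unitality gives $\norm{T(a) - \tfrac{1}{2}\1} \leq \tfrac{1}{2}$, and since $T(a)$ is self-adjoint this places $\Sp{T(a)} \subseteq [0,1]$, so $T(a) \geq 0$. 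Thus $T$ is positive, which completes the equivalence.
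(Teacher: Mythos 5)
Your proposal is correct, but note first that the paper does not actually prove this proposition: it defers entirely to \cite[Proposition~II.6.9.4]{blackadar:2006}, so your write-up necessarily supplies detail the paper omits, and it follows the standard textbook route. Your converse direction (unital contraction implies positive) is complete: the first-order-in-$s$ state test does force $T(\BA_h) \subseteq \BA_h$ (expanding $\abs{\phi(\1 + \mathrm{i}sT(a))}^2 \le 1 + s^2\norm{a}^2$ gives $-2s\phi(c) \le s^2\norm{a}^2$ for all real $s$, hence $\phi(c) = 0$ for every state $\phi$ and so $c = 0$), and the recentering $\norm{T(a) - \tfrac{1}{2}\1} \le \tfrac{1}{2}$ then pins $\Sp{T(a)}$ into $[0,1]$; a marginally slicker alternative, essentially Blackadar's, is to observe that $f \circ T$ is a unital contractive functional, hence a state, for every state $f$, and states detect positivity. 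In the forward direction your self-adjoint case is fine, but for arbitrary $x$ you ultimately invoke the Russo--Dye identity $\norm{T} = \norm{T\1}$ from the literature. That is a legitimate move --- the paper outsources the entire proposition in exactly the same way --- but you should be clear that this identity \emph{is} the whole content of the implication, so your Cauchy--Schwarz numerical-radius computation is a detour: it only yields $\norm{T} \le 2$, as you yourself diagnose, and can simply be deleted. Moreover, the factor $2$ is not removed by some refinement of the numerical-range estimate exploiting centrality of $\1$; the standard mechanism is the Russo--Dye theorem (the closed unit ball of a unital \CA-algebra is the closed convex hull of the unitary group) combined with $\norm{T(u)} \le 1$ for every unitary $u$, which one gets by restricting $T$ to the commutative \CA-subalgebra $C^*(u, \1)$, where positivity automatically upgrades to complete positivity and unital completely positive maps are contractive. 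With that understood, there is no genuine gap in your argument, only a dispensable detour and a citation placed at the one genuinely hard step.
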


\section{A counterexample}
    \label{sec:example}

A bounded linear operator $T$ on a Banach space $E$ is called \emph{doubly power bounded} if it is bijective and satisfies
\[
    \sup_{n \in \Z } \norm{ T^{n} } < \infty.
\]
For such operators we always have $ \Sp{T} \subseteq  \T $.
In \cite{emelyanov:2004}, Emel'yanov gave an example of a doubly power bounded and positive operator $T$ on an $L^{1}$-space such that the inverse operator $T^{-1}$ is not positive. 
In \cite{alpay:2006} similar examples were shown to exist in fact on every infinite-dimensional $L^{1}$-space.
By taking dual operators, one thus obtains a doubly power bounded positive operator on a commutative von Neumann algebra such that the inverse operator is not positive. 
Yet, this result does not give a complete answer to Conjecture~\ref{conj:automorphism} since the operators on $L^{1}$ that are constructed in \cite{emelyanov:2004} and \cite{alpay:2006} are not norm preserving on the positive cone and hence, their dual operators are not unital.   

The following example is an adaptation of the construction in \cite{emelyanov:2004} and shows that the answer to Conjecture~\ref{conj:automorphism} is negative, even on commutative \CA-algebras.

\begin{example}
    \label{exa:perturbed-shift}
    Let $ \BA $ be the commutative \CA-Algebra
    \[
        \BA  = \ell^{\infty}(\Z) \times \C
    \]
    which is unital with $ \1 = ( e , 1 ) $, where $ e $ is the unit in $ \ell^{\infty}(\Z) $.
    We construct a unital and (completely) positive operator $T$ on $\BA$ that is doubly power bounded with $T^{-1}$ not positive. 
    Due to the double power boundedness, one has $\Sp{T} \subseteq \T$.

    To this end, let $R$ be the right shift on $ \ell^{\infty}(\Z) $ and $M$ the multiplication operator on $ \ell^{\infty}(\Z) $ given by
    \[
        Mx(j) = 
        \begin{cases}
            x(j) \quad & \text{if } j \neq 0 , \\
            \frac{1}{2} x(0) \quad & \text{if } j = 0 
        \end{cases}
    \]
    for all $x = ( x(j) ) \in \ell^{\infty}(\Z)$.
    Define the operator $T \colon \BA \to \BA $ by
    \[
        T ( x, \alpha ) = ( R M x  + \frac{1}{2} \alpha e_{1} , \alpha )   
    \]
    for all $ ( x , \alpha ) \in  \BA  $, where $ e_{1} \in \ell^\infty(\Z)$ is the canonical unit vector which is $1$ at index $1$ and $0$ at every other index.
    
    Then $T$ is a positive (hence completely positive) operator satisfying $T\1 = \1$ and $\norm{T^n} \le 1$ for all integers $n \ge 0$.
    The inverse of $T$ is given by
    \[
        T^{-1} ( x , \alpha ) = (M^{-1} L x - \alpha e_{0} , \alpha ) , 
    \]
    for all $(x, \alpha) \in \BA$, where $ L $ the left shift on $ \ell^{\infty}(\Z) $ and $e_{0} \in \ell^\infty(\Z)$ is the canonical unit vector that is $1$ at the index $0$. 

    So $T^{-1}$ is not positive since it maps $(0,1)$ to $(-e_{0},1)$.
    By using the formula for $T^{-1}$ one can easily check that $\norm{T^{-n}} \le 3$ for each integer $n \ge 1$ hence $T$ is doubly power bounded.
\end{example}

\begin{remark}
    Let $T$ be the operator constructed in Example~\ref{exa:perturbed-shift}. 
    Then every number $\lambda \in \T$ is an eigenvalue of $T$ which is geometrically simple if $\lambda \neq 1$ and has geometric multiplicity $2$ if $\lambda = 1$.

    More precisely, the fixed space $\ker(1-T)$ is spanned by the unit $ \1 $ and the vector $(x, 0)$ for $x \in \ell^\infty(\Z)$ given by 
    \[
        x(j) = 
        \begin{cases}
            2, \quad & \text{if } j \leq 0,   \\
            1, \quad & \text{if } j \geq 1.
        \end{cases}
    \]
    For every $1 \neq \lambda \in \T$ the eigenspace $\ker(\lambda - T)$ is spanned by the eigenvector $(x, 0) $ where $x \in \ell^\infty(\Z)$ is given by 
    \begin{align*}
        x(j) = 
        \begin{cases}
            2 \lambda^{-j} \quad & \text{if } j \le 0, \\
              \lambda^{-j} \quad & \text{if } j \ge 1.
        \end{cases}
    \end{align*}
\end{remark}

\section{Positive inverses in finite dimensions}
    \label{sec:finite-dime} 

In this section we give a short and self-contained proof that Conjecture~\ref{conj:automorphism} holds on finite-dimensional \CA-algebras. 
More precisely, we show the following result.

\begin{theorem}
    \label{thm:automorphism-finite-dim}
    Let $\BA$ be a finite-dimensional \CA-algebra and let $T \colon  \BA \to \BA$ be a unital linear map with spectrum $\Sp{T} \subseteq \T$. 
    \begin{enumerate}[\upshape(i)]
        \item  
        If $T$ is positive, then so is $T^{-1}$ and $T$ is a Jordan automorphism.

        \item 
        If $T$ is a Schwarz operator, then $T$ is even a \CA-automorphism.

        \item 
        In particular, if $T$ is $n$-positive for some $n \ge 2$, then $T$ is a \CA-automorphism.
    \end{enumerate}
\end{theorem}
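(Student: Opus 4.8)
The plan is to derive all three statements from the abstract criteria already at our disposal, namely Proposition~\ref{prop:posop-jordan} and Proposition~\ref{prop:schwarz-star-automorphism}, by showing that $T$ is invertible with both $T$ and $T^{-1}$ positive unital contractions. The point of departure is that $\Sp{T} \subseteq \T$ forces $0 \notin \Sp{T}$, so $T$ is invertible, and $T^{-1}$ is again unital because $T\1 = \1$.

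First I would establish forward power boundedness. For every $n \ge 0$ the operator $T^{n}$ is unital (iterating $T\1 = \1$) and positive (a composition of positive maps is positive), so Proposition~\ref{prop:positivity-by-one} gives $\norm{T^{n}} \le 1$. Together with $\Sp{T} \subseteq \T$ and $\dim \BA < \infty$, this forces $T$ to be diagonalisable: a nontrivial Jordan block attached to a unimodular eigenvalue would make $\norm{T^{n}}$ grow at least linearly in $n$, contradicting boundedness. Hence we may write $T = \sum_{j=1}^{m} \lambda_{j} P_{j}$ with distinct eigenvalues $\lambda_{j} \in \T$ and spectral projections $P_{j}$, so that $T^{-1} = \sum_{j=1}^{m} \lambda_{j}^{-1} P_{j}$.

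The crux is then a simultaneous approximation argument realising $T^{-1}$ as a limit of \emph{positive} operators. Since $\T^{m}$ is compact, the sequence $(\lambda_{1}^{n}, \dots, \lambda_{m}^{n})_{n \ge 1}$ has a convergent subsequence; taking differences of indices (and thinning them out so the gaps tend to infinity) produces integers $n_{k} \to \infty$ with $\lambda_{j}^{n_{k}} \to 1$ for every $j$, whence $m_{k} \coloneqq n_{k} - 1 \to \infty$ satisfies $\lambda_{j}^{m_{k}} \to \lambda_{j}^{-1}$. By the spectral decomposition this means $T^{m_{k}} \to T^{-1}$ in operator norm. As each $T^{m_{k}}$ is positive and the cone of positive operators is norm-closed, $T^{-1}$ is positive; being also unital, it is a contraction by Proposition~\ref{prop:positivity-by-one}. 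Now both $T$ and $T^{-1}$ are contractive and positive, so Proposition~\ref{prop:posop-jordan} shows that $T$ is a Jordan automorphism, which proves~(i).

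For~(ii) I would rerun the same limiting argument inside the class of Schwarz operators. A Schwarz operator is in particular positive, so the entire setup above applies and yields the subsequence with $T^{m_{k}} \to T^{-1}$. Moreover each $T^{n}$ ($n \ge 0$) is a unital Schwarz contraction: writing $S = T^{n-1}$ (which is positive and Schwarz by induction) and combining the Schwarz inequalities for $T$ and $S$ gives
\[
    T^{n}(x)^{*} T^{n}(x) = S(Tx)^{*} S(Tx) \le S\bigl( (Tx)^{*}(Tx) \bigr) \le S\bigl( T(x^{*}x) \bigr) = T^{n}(x^{*}x),
\]
and $\norm{T^{n}} = 1$ since $T^{n}$ is unital and positive. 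Because this inequality is preserved under norm limits, $T^{-1} = \lim_{k} T^{m_{k}}$ is again a Schwarz operator, and it is unital hence contractive; Proposition~\ref{prop:schwarz-star-automorphism} then gives that $T$ is a \CA-automorphism. Finally~(iii) is immediate from~(ii), since every $n$-positive operator with $n \ge 2$ is a Schwarz operator. The main obstacle is the middle step, the extraction of indices $m_{k}$ with $T^{m_{k}} \to T^{-1}$: it relies on the diagonalisability of $T$ (so that powers are controlled spectrally) and on simultaneous Diophantine approximation of the unimodular eigenvalues, after which the positivity and Schwarz conclusions come cheaply from the closedness of the relevant cones.
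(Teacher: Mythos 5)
Your proof is correct and takes essentially the same route as the paper: unital positivity gives contractivity via Proposition~\ref{prop:positivity-by-one}, power boundedness plus unimodular spectrum forces diagonalisability, simultaneous recurrence of the eigenvalues yields positive (respectively Schwarz) powers $T^{m_k} \to T^{-1}$ in norm, and Propositions~\ref{prop:posop-jordan} and~\ref{prop:schwarz-star-automorphism} then finish exactly as in the paper. The only minor variations are that you establish the recurrence $\lambda_j^{n_k} \to 1$ by the subsequence-and-differences trick in the compact group $\T^{m}$ instead of the paper's minimal-set argument (Lemma~\ref{lem:group-rotations-are-recurrent}), and that you spell out the induction showing powers of unital Schwarz contractions are again Schwarz, a step the paper leaves implicit.
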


Before the proof a few remarks are in order.

\begin{remark}
    \begin{enumerate}[(i)]
        \item
        On a commutative \CA-algebra every positive operator is already completely positive, hence a Schwarz operator. 
        Example~\ref{exa:perturbed-shift} shows that Theorem~\ref{thm:automorphism-finite-dim} is no longer true if the \CA-algebra is infinite-dimensional.
        
        \item
        If $\BA$ is a finite-dimensional commutative \CA-algebra and if $T $ satisfies the assumptions of Theorem~\ref{thm:automorphism-finite-dim}(i), then $T $ is a row stochastic positive matrix. 
        It follows that $T^{-1}$ is also positive and thus $T$ is a permutation matrix. 
        This is known and can be found in~\cite[Theorem~I.4.5]{schaefer:bl}.
        See also \cite[Theorem~1.1.15]{emelyanov:2007} where almost the same result (for power bounded rather than row stochastic matrices) is proved by means of the Jacobs-de Leeuw-Glicksberg decomposition -- a method that we discuss in Section~\ref{sec:jdlg} below.
    \end{enumerate}
\end{remark}

Our proof of Theorem~\ref{thm:automorphism-finite-dim} is based on the following two simple lemmas. 
The first one is a special case of well-known results in ergodic theory, see for instance \cite[Proposition~3.12(d)]{efhn:2016}. 
However, we include a proof not needing ergodic theory.

Recall that \emph{topological group} is a group endowed with a topology which makes the group operation and the inverse operation continuous.
\begin{lemma}
    \label{lem:group-rotations-are-recurrent} 
    Let $G$ be a topological group with neutral element $1$ and assume that $G$ is compact and metrizable. 
    For every $g\in G$ there exists a sequence of integers $ 1 \leq n_{k} \to \infty$  such that $ g^{n_{k}} \to 1$.
\end{lemma}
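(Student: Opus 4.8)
The plan is to prove that for every $g$ in a compact metrizable group $G$, some subsequence of the positive powers $g^n$ returns to the neutral element. The key idea is a pigeonhole/recurrence argument applied to the sequence $(g^n)_{n \ge 1}$, which lives in the compact metric space $G$. First I would fix a compatible metric $d$ on $G$ and consider the sequence of points $g, g^2, g^3, \dots$ Since $G$ is compact, this sequence has a convergent subsequence; more usefully, it has cluster points, and by a standard pigeonhole argument one can find, for every $\varepsilon > 0$, two distinct exponents $m < n$ with $d(g^m, g^n) < \varepsilon$. This is the heart of the matter: partition $G$ into finitely many sets of diameter less than $\varepsilon$ (possible by compactness), and since infinitely many of the $g^n$ fall into one such set, two of them, say $g^m$ and $g^n$ with $m < n$, are within $\varepsilon$ of each other.

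The next step is to convert closeness of $g^m$ and $g^n$ into closeness of $g^{n-m}$ to the identity. Here I would use continuity of the group operations. From $d(g^m, g^n) < \varepsilon$ I want to deduce that $g^{n-m}$ is close to $1$. The natural move is to multiply both sides by $g^{-m}$: since $g^n = g^m \cdot g^{n-m}$, closeness of $g^m$ and $g^n$ should translate, via continuity of multiplication by $g^{-m}$, into closeness of $1$ and $g^{n-m}$. The main obstacle is that multiplication by $g^{-m}$ is a homeomorphism but \emph{not} uniformly controlled as $m$ varies, so I cannot directly turn a fixed $\varepsilon$ into a fixed bound on $d(1, g^{n-m})$. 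To circumvent this, I would instead work with the uniform structure induced by the metric on the compact group, or more elementarily, argue directly for a sequence of shrinking $\varepsilon_k \to 0$.

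Concretely, I would proceed as follows. For each $k$, apply the pigeonhole argument with $\varepsilon = 1/k$ to obtain exponents $m_k < n_k$ with $d(g^{m_k}, g^{n_k}) < 1/k$. Setting $p_k = n_k - m_k \ge 1$, I claim $g^{p_k} \to 1$. To see this cleanly, I would invoke left-invariance: on a compact metrizable group one may choose the metric $d$ to be left-invariant (this follows from the existence of a compatible left-invariant metric on metrizable topological groups, by the Birkhoff–Kakutani theorem), so that $d(g^{m_k}, g^{n_k}) = d(1, g^{p_k})$, giving $d(1, g^{p_k}) < 1/k \to 0$ immediately. Finally, since the $p_k$ are positive integers and I need $p_k \to \infty$, I would pass to a subsequence: if the $p_k$ did not tend to infinity, some value $p$ would recur infinitely often, forcing $g^p = 1$, and then the powers $g^p, g^{2p}, g^{3p}, \dots$ furnish a sequence of positive exponents tending to infinity with $g^{jp} = 1 \to 1$; otherwise a subsequence of the $p_k$ already tends to infinity. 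Either way we obtain integers $1 \le n_k \to \infty$ with $g^{n_k} \to 1$, as required.

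The step I expect to be the genuine obstacle is the passage from metric closeness of $g^m$ and $g^n$ to closeness of $g^{n-m}$ to the identity; invoking a left-invariant metric makes this transparent, but if one prefers to avoid the Birkhoff–Kakutani theorem, one can instead phrase the whole pigeonhole argument directly in terms of a finite open cover by translates of a symmetric neighborhood $U$ of $1$ with $U \cdot U$ contained in a prescribed neighborhood, using compactness to extract a repeated translate and closure under the continuous inversion and multiplication maps to land $g^{n-m}$ inside the target neighborhood. Both routes are elementary and avoid ergodic theory, in keeping with the remark preceding the lemma.
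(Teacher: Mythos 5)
Your proof is correct, but it takes a genuinely different route from the paper. The paper argues via topological dynamics: using compactness and Zorn's lemma it extracts a minimal non-empty closed set $M$ with $gM \subseteq M$, shows by minimality that every orbit tail $\{g^n h \colon n \ge n_0\}$ is dense in $M$ for a fixed $h \in M$, extracts a strictly increasing sequence with $g^{n_k}h \to h$, and only at the very last step uses the group structure by multiplying with $h^{-1}$. You instead run a metric pigeonhole argument on the powers $g, g^2, g^3, \dots$: cover $G$ by finitely many sets of small diameter, find $m_k < n_k$ with $d(g^{m_k}, g^{n_k}) < 1/k$, and convert this into $d(1, g^{n_k - m_k}) < 1/k$ by choosing the metric left-invariant (Birkhoff--Kakutani), which is legitimate since metrizability gives first countability and Hausdorffness. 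You correctly identify the one genuine trap -- left translation by $g^{-m}$ is not uniformly controlled as $m$ varies, so a generic metric would not suffice -- and both of your fixes (invariant metric, or symmetric neighborhoods $U$ with $U^{-1}U$ inside a prescribed neighborhood) are standard and sound; your case analysis ensuring the return times $p_k$ can be taken to tend to infinity (either a subsequence of the $p_k$ does, or some $p$ recurs infinitely often, forcing $g^p = 1$ and letting you take multiples of $p$) is also correct and is a step one could easily have overlooked. Comparing the two: your argument is more elementary and quantitative -- it avoids Zorn's lemma entirely and uses only total boundedness plus an invariant metric (or the uniformity) -- at the modest cost of invoking Birkhoff--Kakutani and the final case distinction; the paper's argument needs Zorn but uses the group structure only once, and its minimal-set mechanism is the one that generalizes to recurrence statements for arbitrary compact dynamical systems, which is presumably why the authors chose it even though, for groups, your pigeonhole proof is shorter in spirit.
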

\begin{proof}
    Fix $g \in G$.
    We call a set $S$ \emph{left invariant under $g$} if $gS \subseteq S$. 
    It follows from the compactness of $G$ and from Zorn's lemma that there exists a set $M \subseteq G$ which is minimal (with respect to set inclusion) among all non-empty, closed subsets of $G$ that are left invariant under $g$. 

    Fix a point $h \in M$.
    The minimality of $M$ implies that, for each integer $n_{0} \ge 0$, the orbit tail 
    \[
        \{g^{n} h \colon n \ge n_0\}
    \]
    is dense in $M$.
    In particular, each of those tails contains $h$ in its closure. 
    Hence, we can iteratively construct a strictly increasing sequence of integers $1 \le n_{k} $ such that 
    \[
        \operatorname{d}(g^{n_{k}}h, h) \le \frac{1}{k} \quad \text{for each $k \ge 1$.} 
    \]
    So we have $ \lim_{k}g^{n_{k}}h = h $ and by multiplying from the right with $h^{-1}$ we obtain $ \lim_{k} g^{n_{k}} = 1$, as claimed.
\end{proof}

We now apply this lemma to power bounded operators on the matrix algebra $M_{n}(\C)$.

\begin{lemma}
    \label{lem:powers-unit-circle}
    Let $m \ge 1$ be an integer. 
    \begin{enumerate}[\upshape(i)]
        \item  
        Let $\lambda_{1}, \dots, \lambda_{m} \in \T$. 
        Then there exists a sequence of integers $n_{k} \to \infty$ such that $\lambda_j^{n_{k}} \to 1$ as $k \to \infty$ for each $j \in \{1, \dots, m\}$.

        \item 
        Let $T \in M_{n}(\C)$ be power bounded and assume that $\Sp{T} \subseteq \T$. 
        Then there exists a sequence of integers $n_{k} \to \infty$ such that $T^{n_{k}} \to \operatorname{id}$ as $k \to \infty$, where $\operatorname{id}$ denotes the identity operator on $M_{n}(\C)$.

        \item 
        Let $T \in M_{n}(\C)$ be power bounded and let $P$ denote the spectral projection of $T$ that belongs to the part $\Sp{T} \cap \T$ of the spectrum. 
        Then there exists a sequence of integers $n_{k} \to \infty$ such that $T^{n_{k}} \to P$ as $k \to \infty$.
    \end{enumerate}
\end{lemma}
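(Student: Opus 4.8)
The plan is to obtain all three assertions from Lemma~\ref{lem:group-rotations-are-recurrent} by reducing each to the recurrence of a single element in a compact group. For part~(i) I would endow the $m$-torus $\T^m$ with its product topology and coordinatewise multiplication, making it a compact metrizable topological group with neutral element $(1,\dots,1)$. Applying Lemma~\ref{lem:group-rotations-are-recurrent} to $g = (\lambda_1,\dots,\lambda_m)$ then produces integers $1 \le n_k \to \infty$ with $g^{n_k} = (\lambda_1^{n_k},\dots,\lambda_m^{n_k}) \to (1,\dots,1)$, which is precisely the claim that $\lambda_j^{n_k} \to 1$ for every $j$.

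Part~(ii) rests on the observation that a power bounded matrix with spectrum in $\T$ must be diagonalizable. Indeed, a Jordan block of size at least two for an eigenvalue $\lambda$ with $\abs{\lambda} = 1$ has powers whose off-diagonal entries grow at least linearly in the exponent, which is incompatible with $\sup_n \norm{T^n} < \infty$. Thus I may write $T = \sum_{j=1}^m \lambda_j P_j$, where $\lambda_1,\dots,\lambda_m \in \T$ are the distinct eigenvalues and $P_1,\dots,P_m$ are the corresponding spectral projections with $\sum_j P_j = \operatorname{id}$. Feeding these eigenvalues into part~(i) yields a sequence $n_k \to \infty$ along which $T^{n_k} = \sum_j \lambda_j^{n_k} P_j \to \sum_j P_j = \operatorname{id}$.

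For part~(iii) I would decompose the space along the spectral projection $P$ associated with $\Sp{T} \cap \T$. Since $P$ commutes with $T$, one has $T^n = T^n P + T^n(\operatorname{id} - P)$ for every $n$. On the range of $\operatorname{id} - P$ the spectrum of $T$ lies in the open unit disk, so the relevant spectral radius is strictly less than $1$ and $T^n(\operatorname{id} - P) \to 0$ for any exponents tending to infinity. On the range of $P$ the restriction of $T$ is power bounded with spectrum in $\T$, so part~(ii) furnishes a sequence $n_k \to \infty$ with $T^{n_k} P \to P$; along this very sequence the interior part vanishes, and summing the two pieces gives $T^{n_k} \to P$.

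The one substantive point is the diagonalizability statement behind part~(ii), i.e.\ the absence of nontrivial Jordan blocks on the peripheral spectrum of a power bounded operator. Once this is settled, parts~(ii) and~(iii) amount to routine manipulation of commuting spectral projections, while part~(i) is a direct appeal to Lemma~\ref{lem:group-rotations-are-recurrent}.
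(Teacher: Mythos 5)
Your proof is correct and follows essentially the same route as the paper: part~(i) via the recurrence lemma applied to $(\lambda_1,\dots,\lambda_m) \in \T^m$, part~(ii) via diagonalizability of a power bounded matrix with unimodular spectrum, and part~(iii) by splitting along the spectral projection $P$. The only difference is that you spell out details the paper leaves implicit, namely the linear growth of powers of nontrivial Jordan blocks and the decay $T^n(\operatorname{id}-P) \to 0$ on the complementary spectral subspace.
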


\begin{proof}
    (i) 
    Consider the element $\lambda = (\lambda_1, \dots, \lambda_m)$ in the compact group $\T^{m}$ (endowed with componentwise multiplication). 
    It follows from Lemma~\ref{lem:group-rotations-are-recurrent} that there exists a sequence of integers $n_{k} \to \infty$ such that $\lambda^{n_{k}}  \to 1$ as $k \to \infty$.
    
    (ii) 
    Since $T$ is power bounded and all its eigenvalues are located on $\T$, the Jordan normal form of $T$ only contains Jordan blocks of size $1$, \ie, $T$ is diagonalizable. 
    Hence, the claim readily follows from~(i).

    (iii) 
    This follows from (ii) by restricting $T$ to the image of $P$.
\end{proof}

The observation in Lemma~\ref{lem:powers-unit-circle}(i) is used implicitly in some proofs about operators acting on finite-dimensional \CA-algebras, for instance in \cite[Proposition~3.1]{fidaleo:2023} and \cite[Theorem~2.1]{kuperberg:2003}.

\begin{proof}[Proof of Theorem~\ref{thm:automorphism-finite-dim}]
    (i) 
    Since $T$ is unital and positive, it is contractive by Proposition~\ref{prop:positivity-by-one} and hence power bounded. 
    As $\Sp{T} \subseteq \T$, Lemma~\ref{lem:powers-unit-circle}(ii) shows that there exists a sequence of integers $1 \le n_{k} \to \infty$ such that $T^{n_{k}} \to \operatorname{id}$. 
    Thus, $T^{n_{k}-1} \to T^{-1}$, which shows that $T^{-1}$ is positive, too. 
    
    Since $T^{-1}$ is unital, too, it is also contractive by Proposition~\ref{prop:positivity-by-one}.  
    Thus, Proposition~\ref{prop:posop-jordan} can be applied and gives that $T$ is a Jordan automorphism.

    (ii) 
    As shown in~(i), one has $T^{-1} = \lim_{k \to \infty} T^{n_{k}-1}$, so $T^{-1}$ is Schwarz.
    Since $T$ and $T^{-1}$ are contractive, it follows from Proposition~\ref{prop:schwarz-star-automorphism} that $T$ is a \CA-automorphism.

    (iii) 
    This is an immediate consequence of~(ii), since every $2$-positive operator is a Schwarz operator.
\end{proof}

\section{Positive inverses in infinite dimensions I: the isometric case}
    \label{sec:positive-isometries}

If $T$ is a positive and unital operator on an infinite-dimensional unital \CA-algebra, the assumption $\Sp{T} \subseteq \T$ does not suffice to ensure that $T^{-1}$ is positive as seen in Example~\ref{exa:perturbed-shift}. 
Throughout the rest of this paper we discuss stronger assumptions giving positivity of $T^{-1}$. 
The first condition is that $T$ is a bijective isometry. 
This is a strong assumption and the proof is a simple consequence of Proposition~\ref{prop:positivity-by-one}.

\begin{proposition}
    \label{prop:isometry-unital}
    Let $T \colon  \BA \to \BA$ be a positive and unital operator on a unital \CA-algebra $\BA$.
    The following are equivalent:
    \begin{enumerate}[\upshape(a)]
         \item 
         One has $0 \not\in \Sp{T}$ and the inverse operator $T^{-1}$ is positive.

         \item 
         The operator $T$ is a bijective isometry.
    \end{enumerate}
\end{proposition}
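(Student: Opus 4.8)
The plan is to reduce both implications to Proposition~\ref{prop:positivity-by-one}, which characterizes positivity of a unital operator on a unital \CA-algebra as contractivity. The preliminary observation I would record is that, whenever $T$ is invertible, its inverse is again unital: applying $T^{-1}$ to the identity $T\1 = \1$ yields $T^{-1}\1 = \1$. Hence, for a unital $T$, the question whether $T^{-1}$ is positive becomes, via Proposition~\ref{prop:positivity-by-one}, the question whether $T^{-1}$ is contractive, and this equivalence is the bridge I would use in both directions.

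For the implication (a) $\Rightarrow$ (b): the hypothesis $0 \notin \Sp{T}$ makes $T$ bijective, and since $T$ is positive and unital, Proposition~\ref{prop:positivity-by-one} gives $\norm{T} \le 1$. By the preliminary observation $T^{-1}$ is unital, and since it is assumed positive, Proposition~\ref{prop:positivity-by-one} applied to $T^{-1}$ gives $\norm{T^{-1}} \le 1$. I would then combine the two contractivity bounds: for every $x \in \BA$,
\[
    \norm{x} = \norm{T^{-1}(Tx)} \le \norm{T^{-1}}\,\norm{Tx} \le \norm{Tx} \le \norm{T}\,\norm{x} \le \norm{x},
\]
so every inequality is an equality and $\norm{Tx} = \norm{x}$. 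Thus $T$ is a bijective isometry.

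For the implication (b) $\Rightarrow$ (a): if $T$ is a bijective isometry then $0 \notin \Sp{T}$, and $T^{-1}$ is again a bijective isometry, in particular a unital contraction. Proposition~\ref{prop:positivity-by-one} then shows that $T^{-1}$ is positive, which is precisely~(a).

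The argument is short and I do not anticipate a genuine obstacle; the only point requiring care is the bookkeeping that $T^{-1}$ is unital, which is what legitimizes applying Proposition~\ref{prop:positivity-by-one} to $T^{-1}$ and not merely to $T$. I would also flag that the assumption $0 \notin \Sp{T}$ in~(a) is exactly what guarantees that $T$ is invertible, so that $T^{-1}$ exists as a bounded operator and the statement about its positivity is meaningful in the first place.
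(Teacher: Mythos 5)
Your proof is correct and follows essentially the same route as the paper: both directions are reduced to Proposition~\ref{prop:positivity-by-one} via the observation that $T^{-1}$ inherits unitality from $T$. The only difference is cosmetic -- you spell out the norm chain showing that a bijection which is contractive together with its inverse must be an isometry, a step the paper leaves implicit.
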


\begin{proof}
    In both cases the operator $T$ is a bijection and $0 \not\in \Sp{T}$

    (a) $\implies$ (b)  
    Since $T$ and $T^{-1}$ are both positive and unital, they are both contractive according to Proposition~\ref{prop:positivity-by-one}.

    (b) $\implies$ (a)  
    Since $T$ is isometric, $T^{-1}$ is contractive and satisfies $T^{-1}\1 = \1$.
    Proposition~\ref{prop:positivity-by-one} implies that $T^{-1}$ is positive.
\end{proof}

The next result is slightly more involved and shows that the implication (b) $\implies$ (a) in Proposition~\ref{prop:isometry-unital} remains true if $T$ is not assumed to be unital (and even if the underlying \CA-algebra is not unital).
An analogous result is known to hold for positive operators an Banach lattices, see \cite[Theorem~2.2.16]{emelyanov:2007}. 

\begin{theorem}
    \label{thm:isometry}
    Each positive surjective linear isometry on a \CA-algebra has a positive inverse.
\end{theorem}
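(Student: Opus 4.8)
The plan is to reduce the statement to the unital situation of Proposition~\ref{prop:isometry-unital} by passing to the bidual, the only genuine work being to show that the extended operator fixes the unit.

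Let $T$ be a positive surjective isometry on $\BA$. Being isometric, $T$ is injective, hence bijective, and $T^{-1}$ is again an isometry; the goal is to show that $T^{-1}$ is positive. First I would pass to the bidual $M \coloneqq \BA^{**}$, a unital von Neumann algebra into which $\BA$ embeds as a \CA-subalgebra, so that the embedding both preserves and reflects positivity. The second adjoint $S \coloneqq T^{**}$ is a bijective isometry of $M$ with $S^{-1} = (T^{-1})^{**}$, and $S$ is positive because $T^{*}$ maps positive functionals to positive functionals. It therefore suffices to prove that $S^{-1}$ is positive: then $T^{-1}a = S^{-1}a \in M_{+} \cap \BA = \BA_{+}$ for every $a \in \BA_{+}$.

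The heart of the matter is to show that $S$ is unital, i.e.\ $S\1 = \1$. Since $S$ is positive it commutes with the involution — for self-adjoint $x$ the element $Sx$ is a difference of positive elements, hence self-adjoint, and the general case follows by splitting $x$ into real and imaginary parts. Consequently $S$ restricts to a bijective real-linear isometry of the self-adjoint part $M_{h}$, and hence maps its closed unit ball, which is precisely the order interval $[-\1,\1]$, onto itself. Now set $b \coloneqq S\1 \ge 0$. For every $x$ with $-\1 \le x \le \1$ positivity gives $-b = S(-\1) \le Sx \le S\1 = b$, so $S([-\1,\1]) \subseteq [-b,b]$; since the left-hand side equals $[-\1,\1]$, this forces $\1 \le b$. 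Together with $b \le \norm{b}\,\1 = \1$ (because $\norm{b} = \norm{S\1} = 1$) we conclude $b = \1$, so $S$ is unital.

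It then remains only to invoke the unital case: $S$ is a positive, unital, bijective isometry of the unital \CA-algebra $M$, so Proposition~\ref{prop:isometry-unital} shows that $S^{-1}$ is positive (equivalently, $S^{-1}$ is a unital contraction, hence positive by Proposition~\ref{prop:positivity-by-one}), and the reduction above then gives positivity of $T^{-1}$. I expect the step $S\1 = \1$ to be the main obstacle: unitality of $T$ is not assumed here and $\BA$ need not even be unital, so the identity has to be recovered from positivity alone, using that a surjective isometry carries the unit ball onto itself — which is exactly what the passage to the bidual and the order-interval argument are designed to supply.
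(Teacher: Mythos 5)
Your proof is correct, but it takes a genuinely different route from the paper. The paper argues entirely inside $\BA$, without any unit: it first shows $T(\BA_h) = \BA_h$ (via real/imaginary parts and injectivity), then takes a norm-one preimage $x = x^+ - x^-$ of a positive element and proves by induction that $\norm{x^+ - n x^-} \le 1$ for all $n$, exploiting that $-(y_1 + n y_2) \le y_1 - (n+1) y_2 \le y_1 + n y_2$ together with $\norm{x^+ + n x^-} = \norm{x^+ - n x^-}$ (orthogonality of the positive and negative parts) and the isometry; dividing by $n$ forces $x^- = 0$. You instead reduce to the unital case already settled in Proposition~\ref{prop:isometry-unital} by passing to the enveloping von Neumann algebra $\BA^{**}$ and proving that $S = T^{**}$ fixes the unit. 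Each of your steps is sound: positivity of $T^{**}$ follows because the positive cone of $\BA^{**}$ is the dual cone of $(\BA^*)_+$ (the positive normal functionals on $\BA^{**}$ are exactly the positive functionals on $\BA$); your surjectivity of $S$ on $M_h$ is the same real/imaginary-part argument as the paper's first step; the identification of the closed unit ball of $M_h$ with $[-\1,\1]$ is standard spectral theory; and $\1 \in S([-\1,\1]) \subseteq [-b,b]$ combined with $b \le \norm{b}\,\1 = \1$ indeed forces $b = S\1 = \1$, after which Proposition~\ref{prop:positivity-by-one} finishes the job, and positivity of $T^{-1}$ descends to $\BA$ since the embedding reflects positivity. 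The trade-off: the paper's proof is elementary and self-contained, needing neither a unit nor any von Neumann algebra theory, whereas yours is conceptually cleaner (recover the unit, then quote the unital case) at the cost of bidual machinery — in particular the cone identification in $\BA^{**}$, which implicitly rests on weak$^*$-density of $\BA_+$ in $(\BA^{**})_+$. A pleasant by-product of your argument, not recorded explicitly in the paper, is that a positive surjective isometry on a unital \CA-algebra is automatically unital.
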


\begin{proof}
    Let $T$ be a positive and surjective linear isometry on the \CA-algebra $ \BA $.
    We have to show that $T(\BA_{+}) = \BA_{+} $.
    
    In the first step we show $T(\BA_{h}) = \BA_{h} $.
    Indeed, if $  b \in \BA_{h} $, then there exists $ x \in \BA $ such that $Tx = b $.
    We can decompose $ x $ into its real part $x_{1}$ and its imaginary part $ x_{2} $, \ie,
    \[
        x = x_{1} + \text{i}\,x_{2} 
        \quad \text{, hence} \quad 
        b = T(x_{1}) + \text{i}\,T( x_{2} ).
    \]
    Since $T $ is positive, $Tx_{1} $ and $Tx_{2} $ are self-adjoint, so $T(x_{2}) = 0$ and $  b = T(x_{1}) $.
    
    If $y \in \BA_{+}$ and $\norm{y}=1$, we find, as we have just shown, $ x \in \BA_{h} $, $\norm{x} = 1$, such that $T(x) = y$.
    We now decompose $ x $ into its positive part $ x^{+} $ and it's negative part $ x^{-} $, \ie, 
    \[
        x = x^{+} - x^{-},  
        \quad 
        x^{+}x^{-} = x^{-} x^{+} = 0,  
        \quad 
        \norm{x} = \max \{\norm{x^{+}}, \norm{x^{-}} \}.
    \]
    We have to show that $x^- = 0$.
    The positive elements
    \[
        y_{1} = T x^{+}, 
        \quad 
        y_{2} = T x^{-}
    \]
    of $\BA$ satisfy $0 \le y = y_{1} - y_{2} $.
    
    We now prove by induction that, for each $ n \in \N $,
    \begin{equation}
        \label{eq:norm-estimate}
        \norm{  x^{+} - n x^{-} } \leq 1 . 
    \end{equation}
    The case $ n=1 $ is known, so assume now that~\eqref{eq:norm-estimate} has been shown for a fixed $n \in \N$.
    One has 
    \[
        -(y_{1} + n y_{2}) \leq y_{1} - (n+1) y_{2} \leq y_{1} + n y_{2} .
    \]
    
    Since $ y_{1} $ and $ y_{2} $ are self-adjoint we thus obtain the norm estimate
    \begin{align*}
        \norm{ y_{1} - (n+1) y_{2} } & \leq \norm{ y_{1} + n y_{2} } = \norm{ T( x^{+} + n x^{-}) }  \\
        = \norm{ x^{+} + n x^{-}} &= \norm{ x^{+} -n x^{-}} = 1 .
    \end{align*}
    Again, since $T$ is isometric, it follows that 
    \[
        \norm{ x^{+} - (n+1)x^{-} } \leq 1, 
    \]
    proving~\eqref{eq:norm-estimate}. 
    Now we divide~\eqref{eq:norm-estimate} by $n$ and let $n \to \infty$ to obtain $x^- = 0$.
\end{proof}

\begin{remark}
    \begin{enumerate}[(i)]
        \item 
        If $T$ is a surjective isometry on a Banach space, then $\Sp{T} \subseteq \T$ if $T$ is surjective.
        If $T$ is not surjective, then its spectrum is equal the set $ \mathbb{D} = \{ \lambda \in \C \colon \abs{\lambda} \leq 1 \}$ (see \eg \cite[Theorem~3.3]{fidaleo:2022}). 
        Since injective \CA-homomorphism are isometric, the theorem gives a spectral characterization of \CA-automorphism.
        
        \item 
        Isometric operators between unital \CA-algebras are discussed in \cite{kadison:1951} and more can be found in \cite{stoermer-in:1973}.
    
        \item 
        An operator $T $ on a \CA-algebra $ \BA $ is called $ n $-isometric if its canonical extension $T^{(n)} $ to $ M_{n}(\BA) $ is isometric.
        Assume that $T$ is $n$-positive, $n$-isometric and surjective.
        Then we can apply Theorem~\ref{thm:isometry} to the extension $T^{(n)} $ and obtain that $T^{-1} $ is $n$-positive. 
        Hence, $T$ is a \CA-automorphism by Proposition~\ref{prop:schwarz-star-automorphism}.
        As a special case we obtain \cite[Corollary~1.3.10]{blecher:2004}: Every completely isometric unital surjection on an \CA-algebra is already a \CA-automorphism.
    \end{enumerate}
\end{remark}

\section{Intermezzo: the Jacobs-de Leeuw-Glicksberg decomposition}
    \label{sec:jdlg}

The arguments used to prove the finite-dimensional Theorem~\ref{thm:automorphism-finite-dim} relied on the fact that a power bounded matrix acts as a multiplication on its eigenspaces and on the fact that one can find a sequence $1 \le n_{k} \to \infty$ such that $\lambda^{n_{k}} \to 1$ simultaneously for all $\lambda$ from a finite subset of the unit circle $\T$ (Lemma~\ref{lem:powers-unit-circle}). 
These arguments cannot be directly generalized to infinite-dimensional spaces (although Lemma~\ref{lem:powers-unit-circle}(a) can be generalized to obtain $\lambda^{n_{k}} \to 1$ simultaneously for all $\lambda \in \T$ if one allows $(n_{k})$ to be a subnet rather than a subsequenes of the natural numbers -- the net is needed due to the fact that the product topology on $\T^{\T}$ is not metrizable).

However, there is an infinite-dimensional substitute for the arguments from Section~\ref{sec:finite-dime}. 
This is the so-called \emph{Jacobs-de Leeuw-Glicksberg decomposition}, which we now formulate in a version that is appropriate for our application to positive operators on \CA-algebras. 
We refer for instance to \cite[Chapter~16]{efhn:2016} or \cite[Section~2.4]{krengel:1985} for the details of this theory. 

To make the decomposition work, a compactness assumption on the orbits of the operator is needed.
A bounded linear operator $T$ on a Banach space $E$ is called \emph{almost periodic} if the orbit $\{T^n x \colon  n \in \N_0\}$ is relatively norm compact in $E$ for each $x \in E$. 
Note that almost periodicity implies power boundedness by the uniform boundedness theorem.

\begin{theorem}
    \label{thm:jdlg}
    Let $T$ be a bounded linear operator on a complex Banach space $E$. 
    I{f\/} $T$ is almost periodic, then there exists a bounded linear projection $P$ on $E$ with the following properties. 
    \begin{enumerate}[\upshape (i)]
        \item 
        The projection $P$ commutes with $T$ and hence the kernel $\ker P$ and the range $PE$ are invariant under $T$. 

        \item 
        The restriction $T|_{PE}$ is bijective from $PE$ to $PE$. 

        \item 
        There exists a net of integers $1 \le n_{k} \to \infty$ such that $T^{n_{k}} \to P$ strongly.

        \item 
        The kernel of $P$ is given by $\ker P = \{x \in E \colon  T^n x \to 0 \text{ as $n \to \infty$}\}$.

        \item 
        The range $E_{\text{rev}} := PE$ of $P$ is the closed linear span of the eigenspaces $\ker(\lambda-T)$, where $\lambda$ runs through all eigenvalues of $T$ with modulus $1$.
    \end{enumerate}
\end{theorem}

The subspace $E_{\text{rev}} = PE$ in part~(v) above is called the \emph{reversible part} of the space $E$.

\begin{proof}[Proof of Theorem~\ref{thm:jdlg}]
    All these properties can be found in the above mentioned references \cite[Chapter~16]{efhn:2016} and \cite[Section~2.4]{krengel:1985}. 
    The only difference is that, in these references, the relative compactness of the orbits $\{ T^{n}x \colon  n \in \N_{0} \}$ is only assumed with respect to the weak topology and thus, the net $(T^{n_{k}})$ in~(iii) converges to $P$ in the weak operator topology only and in~(iv) the  subspace $\ker P$ consists of those $x$ for which $0$ is in the weak closure of the orbit $\{T^n x \colon n \in \N_{0}\}$. 

    The relative norm compactness of the orbits then readily gives~(iv) as stated in the theorem. 
    To get our version of~(iii) one only has to observe that the relative norm compactness of all orbits $\{T^n x \colon n \in \N_{0}\}$ is equivalent to relative compactness of the set $\{T^n \colon n \in \N_0\}$ in the strong operator topology, see for instance \cite[Corollary~A.5]{engel:2000}.
\end{proof}

Further details on the Jacobs-de Leeuw-Glicksberg decomposition under the assumption that the orbits are relatively compact in norm can be found in \cite[Subsection~V.2(c)]{engel:2000}.

\begin{remark}
    For a bounded linear operator $T$ on a finite-dimensional complex Banach space $E$, almost periodicity of $T$ is equivalent to power boundedness. 
    Then the projection $P$ from Theorem~\ref{thm:jdlg} is the spectral projection that belongs to the part of $\Sp{T}$ on the unit circle.
    Hence, assertion~(iii) of the theorem is an infinite-dimensional version of Lemma~\ref{lem:powers-unit-circle}(iii).
\end{remark}

\section{Positive inverses in infinite dimensions II: the doubly power bounded case on $\LH$}
    \label{sec:ap-operators-on-c-star-algebras}

Let $\LH$ denote the \CA-algebra of all bounded linear operators on a complex Hilbert space $H$. 
By $\Kp{H}$ we denote the ideal of all compact operators and by $\mathcal{L}^1(H)$ the ideal of all trace class operators in $\LH$.

We use the Jacobs-de Leeuw-Glicksberg decomposition from Theorem~\ref{thm:jdlg} to prove the following result.
Related arguments are used in \cite{jdlg:2010, carbone:2020}

\begin{theorem}
    \label{thm:trace-class}
    Let $H$ be a complex Hilbert space and let $T \colon  \LH \to \LH$ be a positive and unital linear operator. 
    Assume that $T$ has a pre-adjoint $T_{*}$ acting on the space $\mathcal{L}^1(H)$ and assume that there exists an injective trace class operator $0 \le b \in \mathcal{L}^{1}(H)_{+}$ that satisfies $T_{*}b \le b$.
    Then the following are equivalent.
    \begin{enumerate}[\upshape(a)]
        \item 
        The inverse operator $T^{-1}$ exists and is positive.

        \item 
        The operator $T$ is a bijective isometry.
        
        \item 
        The operator $T$ is doubly power bounded.
    \end{enumerate}
\end{theorem}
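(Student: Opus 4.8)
The plan is to establish the implications (b) $\Rightarrow$ (c) $\Rightarrow$ (a) together with the equivalence (a) $\Leftrightarrow$ (b), the latter being essentially already available. Indeed, (a) asserts exactly that $0 \notin \Sp{T}$ and that $T^{-1}$ is positive; since $T$ is positive and unital and $\LH$ is unital, Proposition~\ref{prop:isometry-unital} yields (a) $\Leftrightarrow$ (b). The implication (b) $\Rightarrow$ (c) is immediate, because a bijective isometry and its (again isometric) inverse have all integer powers of norm one. Hence the whole content of the theorem is the implication (c) $\Rightarrow$ (a), which I prove by working on the predual $\mathcal{L}^{1}(H)$ and invoking the Jacobs-de Leeuw-Glicksberg Theorem~\ref{thm:jdlg}.

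Assume (c). I first transfer double power boundedness to the pre-adjoint $T_{*}$. Since $T = (T_{*})^{*}$, the two operators have equal spectrum, and double power boundedness of $T$ forces $\Sp{T} \subseteq \T$; thus $0 \notin \Sp{T_{*}}$, so $T_{*}$ is invertible, and taking adjoints gives $\norm{T_{*}^{-n}} = \norm{T^{-n}}$, so that $T_{*}$ is doubly power bounded as well. Moreover $T_{*}$ is positive (as the pre-adjoint of a positive map), and iterating the hypothesis $T_{*} b \le b$ together with positivity yields $T_{*}^{n} b \le b$ for every $n \ge 0$.

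The core step is to show that $T_{*}$ is \emph{almost periodic} on $\mathcal{L}^{1}(H)$. The geometric input is that, for a positive trace class operator $b$, the order interval $[0,b]$ is \emph{norm} compact in $\mathcal{L}^{1}(H)$; this is where summability of the eigenvalues of $b$ enters, via a uniform tail estimate combined with entrywise convergence along an orthonormal eigenbasis of $b$. Granting this, every self-adjoint $\rho$ with $-cb \le \rho \le cb$ has its entire orbit trapped in the compact set $[-cb,cb]$, since positivity of $T_{*}^{n}$ and $T_{*}^{n} b \le b$ give $-cb \le T_{*}^{n}\rho \le cb$; hence such $\rho$ has relatively norm-compact orbit. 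These dominated elements span a subspace that is dense in $\mathcal{L}^{1}(H)$ precisely because $b$ is \emph{injective} (every rank-one operator built from an eigenbasis of $b$ is dominated by a multiple of $b$). As the set of vectors with relatively norm-compact orbit is closed and $T_{*}$ is power bounded, relative compactness propagates from this dense subspace to all of $\mathcal{L}^{1}(H)$, establishing almost periodicity.

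Now Theorem~\ref{thm:jdlg} provides a projection $P_{*}$ and a net $1 \le n_{k} \to \infty$ with $T_{*}^{n_{k}} \to P_{*}$ strongly. Because $T_{*}$ is doubly power bounded, $\ker P_{*} = \{\rho : T_{*}^{n}\rho \to 0\} = \{0\}$, whence $P_{*} = \operatorname{id}$ and $T_{*}^{n_{k}} \to \operatorname{id}$ in norm. Applying the bounded operator $T_{*}^{-1}$ gives $T_{*}^{n_{k}-1} \to T_{*}^{-1}$ strongly, and dualizing back to $\LH$ shows $T^{n_{k}-1} \to T^{-1}$ in the ultraweak topology. Each $T^{n_{k}-1}$ is positive as a nonnegative power of $T$, and the positive cone of $\LH$ is ultraweakly closed, so $T^{-1}$ is positive; this is (a). I expect the genuine obstacle to be the norm compactness of the order interval $[0,b]$ together with the density of the dominated subspace: these are the two places where the hypotheses that $b$ is trace class and injective are really used, and they are what makes the finite-dimensional recurrence argument of Section~\ref{sec:finite-dime} survive in infinite dimensions.
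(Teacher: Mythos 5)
Your proposal is correct and takes essentially the same route as the paper's proof: you reduce (c)~$\Rightarrow$~(a) to the pre-adjoint $T_{*}$, obtain almost periodicity of $T_{*}$ from the norm compactness of $[0,b]$ and the density of its span (the content of Lemma~\ref{lem:trace-class-order-intervals}), and then apply Theorem~\ref{thm:jdlg}, with $\ker P = \{0\}$ forced by double power boundedness, exactly as the paper does. The only blemish is your claim that $T_{*}^{n_{k}} \to \operatorname{id}$ \emph{in norm} -- Theorem~\ref{thm:jdlg}(iii) gives only strong convergence -- but since every subsequent step of yours uses only strong convergence, this is a harmless misstatement rather than a gap.
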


The implication from (c) to (a) can be seen as a partial positive answer to Conjecture~\ref{conj:automorphism}: We replaced the assumption $\Sp{T} \subseteq \T$ by the stronger assumption that $T$ be doubly power bounded and obtain, under the assumptions in the theorem, that $T^{-1}$ is positive.

It is not clear to us whether this implication remains true if one replaces the double power boundedness of $T$ by the weaker assumption $\Sp{T} \subseteq \T$. 
Let us pose this explicitly as an open problem.

\begin{open_problem} 
    \label{probl:without-doubly-pb}
    Let $H$ be a complex Hilbert space and let $T \colon  \LH \to \LH$ be a positive and unital linear operator which has a pre-adjoint $T_{*}$ acting on $\mathcal{L}^1(H)$. 
    Assume that $T_{*} b = b$ for an injective operator $b \in \mathcal{L}^1(H)_{+}$. 
    If $\Sp{T} \subseteq \T$, does it follows that $T^{-1}$ is positive?
\end{open_problem}

We do not even know the answer to this problem in the commutative situation, see the discussion after Corollary~\ref{cor:commutative} below. 
For the proof of Theorem~\ref{thm:trace-class} we need a couple of auxiliary results.

\begin{lemma}
    \label{lem:order-interval-compact}
    Let $H$ be a complex Hilbert space and let $a$ be a positive operator in $\Kp{H}$.
    Then the order interval
    \[
        [0,a] = \{ c \in  \Kp{H}  \colon 0 \leq c \leq a \}
    \]
    is norm compact in $\Kp{H}$.
\end{lemma}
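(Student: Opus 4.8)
The plan is to show that $[0,a]$ is a closed and totally bounded subset of the Banach space $\Kp{H}$; since $\Kp{H}$ is complete, this yields norm compactness. Two of the three ingredients are immediate. The set is bounded, because $0 \le c \le a$ forces $\norm{c} \le \norm{a}$. It is also closed in $\Kp{H}$: the positive cone is norm closed, so a norm limit of operators satisfying $0 \le c \le a$ again satisfies these inequalities, and $\Kp{H}$ is itself norm closed. The entire difficulty therefore lies in \emph{total boundedness}, and the key idea I would exploit is that, because $a$ is compact, every $c \in [0,a]$ is uniformly well approximated by a finite-rank compression.

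To make this precise I would invoke the spectral theorem for the positive compact operator $a$ and let $P_N$ be the orthogonal projection onto the span of the eigenvectors belonging to the $N$ largest eigenvalues (with multiplicity), so that $a$ commutes with $P_N$ and $\norm{(1-P_N)\,a\,(1-P_N)} \to 0$ as $N \to \infty$, the eigenvalues tending to $0$ by compactness. Writing $Q = 1 - P_N$ and decomposing $c = P_N c P_N + P_N c Q + Q c P_N + Q c Q$, I would estimate the three error terms uniformly over $c \in [0,a]$. The diagonal error satisfies $0 \le QcQ \le QaQ$ and hence $\norm{QcQ} \le \norm{QaQ}$. For the off-diagonal terms I would use that the $2\times 2$ block operator representing $c$ with respect to $H = P_N H \oplus QH$ is positive; the standard inequality for a positive block operator $\bigl(\begin{smallmatrix} A & B \\ B^{*} & D\end{smallmatrix}\bigr)$, namely $\norm{B} \le \norm{A}^{1/2}\norm{D}^{1/2}$, then gives $\norm{P_N c Q} = \norm{Q c P_N} \le \norm{P_N c P_N}^{1/2}\norm{QcQ}^{1/2} \le \norm{a}^{1/2}\norm{QaQ}^{1/2}$. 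Combining the three bounds yields $\sup_{c \in [0,a]} \norm{c - P_N c P_N} \to 0$ as $N \to \infty$.

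Granting this uniform approximation, total boundedness follows at once. For each fixed $N$ the compressed operators $P_N c P_N$ all lie in the finite-dimensional space $P_N \LH P_N$ and are bounded by $\norm{a}$, so the family $\{P_N c P_N \colon c \in [0,a]\}$ is relatively compact and in particular totally bounded. Given $\epsilon > 0$, I would choose $N$ so that the supremum above is below $\epsilon/2$ and cover this finite-dimensional family by finitely many $\epsilon/2$-balls; every $c \in [0,a]$ then lies in one of the corresponding $\epsilon$-balls. Hence $[0,a]$ is totally bounded, and being closed in the complete space $\Kp{H}$ it is compact.

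I expect the main obstacle to be the uniform approximation step, and within it the off-diagonal estimate: the content is that positivity of $c$ together with $c \le a$ forces $c$ to concentrate on the top eigenspaces of $a$ uniformly across the whole order interval, which is exactly what the block inequality $\norm{B} \le \norm{A}^{1/2}\norm{D}^{1/2}$ encodes. An alternative, slicker route would be to apply Douglas's factorization lemma to write each $c \in [0,a]$ as $c = a^{1/2} S a^{1/2}$ with $0 \le S \le 1$, and to realize $[0,a]$ as the image of the weak-operator-compact interval $\{S \colon 0 \le S \le 1\}$ under the map $S \mapsto a^{1/2} S a^{1/2}$; since $a^{1/2}$ is compact, this map is continuous from the weak operator topology to the norm topology, whence the image is norm compact. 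Establishing that weak-operator-to-norm continuity would be the crux of this second approach, and it is arguably the more elegant of the two.
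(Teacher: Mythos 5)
Your proof is correct, and your primary argument is genuinely different from the one in the paper. The paper proceeds as follows: given a net $(x_\alpha)$ in $[0,a]$, it uses the factorization result \cite[Corollary~II.3.2.5]{blackadar:2006} to write $x_\alpha = a^{\gamma} y_\alpha a^{\gamma}$ with $0 < \gamma < 1/2$ and $(y_\alpha)$ norm bounded in $\LH_{+}$, extracts a subnet of $(y_\alpha)$ convergent in the weak operator topology, and then uses the compactness of $a^{\gamma}$ twice to upgrade this to norm convergence of the corresponding subnet of $(x_\alpha)$; since $[0,a]$ is norm closed, compactness follows. This is essentially your ``alternative, slicker route'': your Douglas factorization $c = a^{1/2} S a^{1/2}$ with $0 \le S \le 1$ plays the role of Blackadar's corollary (the exponent $1/2$ is admissible here because $\LH$ is a von Neumann algebra; in a general \CA-algebra one needs $\gamma < 1/2$, which is why the paper phrases it that way), and the weak-operator-to-norm continuity of $S \mapsto a^{1/2} S a^{1/2}$ on bounded sets, which you correctly identify as the crux, is exactly what the paper's two-step use of the compactness of $a^{\gamma}$ establishes. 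Your main argument, by contrast, needs no factorization lemma, no nets, and no weak-operator compactness: the spectral projections $P_{N}$ of $a$, the monotonicity bound $\norm{QcQ} \le \norm{QaQ}$, and the block Cauchy--Schwarz inequality $\norm{B} \le \norm{A}^{1/2} \norm{D}^{1/2}$ yield the uniform estimate $\norm{c - P_{N} c P_{N}} \le 2 \norm{a}^{1/2} \norm{QaQ}^{1/2} + \norm{QaQ}$ over all $c \in [0,a]$, and hence total boundedness. This is more elementary and even quantitative, since $\norm{QaQ}$ is the $(N{+}1)$-st eigenvalue of $a$. What the paper's route buys in exchange is brevity (given the cited factorization) and portability: the same factorization-plus-weak-compactness pattern is reused in the proof of Lemma~\ref{lem:trace-class-order-intervals} for order intervals in the trace class, where the corner estimates of your compression argument would have to be redone with trace-norm analogues of the block inequality.
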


\begin{proof}
    Let $ (x_{\alpha} )_{\alpha \in A} $ be a net in $ [0,a] $ and choose a number $0 < \gamma < 1/2$. 
    According to \cite[Corollary~II.3.2.5]{blackadar:2006} there exists a norm bounded net $(y_{ \alpha })_{\alpha \in A}$ in $\LH_{+}$ such that 
    \[
        x_{ \alpha } = a^{ \gamma } y_{ \alpha } a^{ \gamma } 
    \]
    for each $\alpha \in A$.
    A subnet $(y_{\alpha_{\beta}})_{\beta \in B}$ of $(y_\alpha)_{\alpha \in A}$ converges to an operator $y \in \LH_{+}$ with respect to the weak operator topology. 
    To simplify the notation, we denote this subnet by $(y_{\alpha})_{\alpha \in A}$ from now on.
    
    Since $a^{\gamma}$ is compact, it follows that a $(a^{\gamma} y_{\alpha})_{\alpha \in A}$ converges to $a^{\gamma} y$ with respect to the strong operator topology. 
    Again due to the compactness of $a^\gamma$ this implies that 
    \[
    (a^\gamma y_{\alpha}a^\gamma)_{\alpha \in A} = (x_{\alpha})_{\alpha \in A}    
    \]
    converges with respect to the operator norm in $\LH$. 
    Since $\Kp{H}$ is norm closed in $\LH$ and $[0,a]$ is norm closed in $\Kp{H}$ we conclude that $[0,a]$ is indeed norm compact.
\end{proof}

\begin{lemma}
    \label{lem:ell-p-split}
    Let $p \in \left[1,\infty\right[$ and $0 \le x \in \ell^{p}$. 
    Then there exists $0 \le y \in \ell^{p}$ and $0 \le z \in c_{0}$ such that $x = yz$.
\end{lemma}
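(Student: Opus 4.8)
The plan is to reduce the factorization to the construction of a single auxiliary weight sequence. Concretely, I would search for real numbers $b_n > 0$ with $b_n \to \infty$ and $\sum_n x_n^p \, b_n < \infty$. Granting such a sequence, I set $z_n := b_n^{-1/p}$ and $y_n := x_n \, b_n^{1/p}$. Then $x_n = y_n z_n$ for every $n$, and both sequences are nonnegative because $x_n \ge 0$ and $b_n > 0$. Moreover $z_n = b_n^{-1/p} \to 0$ shows $z \in c_0$, while $\sum_n y_n^p = \sum_n x_n^p \, b_n < \infty$ shows $y \in \ell^p$. Thus the whole statement reduces to producing the weights $(b_n)$.

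To build $(b_n)$ I would exploit that the tails $\rho_n := \sum_{k \ge n} x_k^p$ of the convergent series $\sum_k x_k^p$ tend monotonically to $0$. Choosing inductively a strictly increasing sequence of indices $N_1 < N_2 < \cdots$ with $\rho_{N_j} \le 4^{-j}$, I define $b_n := 2^j$ for $N_j \le n < N_{j+1}$ and $b_n := 1$ for $n < N_1$. Then $b_n \to \infty$, and the decisive block estimate is $\sum_{N_j \le n < N_{j+1}} x_n^p \, b_n \le 2^j \rho_{N_j} \le 2^{-j}$, whence $\sum_n x_n^p \, b_n \le \sum_{n < N_1} x_n^p + \sum_{j \ge 1} 2^{-j} < \infty$, exactly as required. (An alternative is to take $b_n := \rho_n^{-1/2}$ directly and use the telescoping estimate $\frac{\rho_n - \rho_{n+1}}{\sqrt{\rho_n}} \le 2(\sqrt{\rho_n}-\sqrt{\rho_{n+1}})$; the block version has the advantage of handling sequences of finite support without a case distinction, since there the tail bounds hold trivially.)

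The main point to get right — and the reason the lemma is not completely obvious — is that one cannot factor pointwise. The naive attempt $z_n := x_n^{\theta}$, $y_n := x_n^{1-\theta}$ with $\theta \in (0,1)$ does give $z \in c_0$, but $\sum_n x_n^{(1-\theta)p}$ may well diverge, so $y$ need not lie in $\ell^p$. The resolution is that the speed at which $z_n$ tends to $0$ must be governed by the decay of the tails $\rho_n$ rather than by the individual terms $x_n$, which is precisely what the block construction encodes. Once the weight sequence is in place, the remaining verifications that $y \in \ell^p$ and $z \in c_0$ are the routine computations displayed above.
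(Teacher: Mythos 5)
Your proof is correct and takes essentially the same approach as the paper: the paper reduces to $p=1$ and iteratively doubles the entries of $x$ beyond tail-sum thresholds chosen so that each doubling changes the $\ell^{1}$-norm by at most $2^{-n}$, which produces exactly a weight sequence of your form, namely a power of $2$ that increases by one factor on each successive tail block. The only differences are cosmetic: your block formulation works for all $p$ directly and avoids the paper's assumption that all entries of $x$ are nonzero.
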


\begin{proof}
    It suffices to prove the lemma for $p=1$ since for other values of $p$ the claim then follows by considering the componentwise power $x^{p}$; so let $p=1$ and we also assume that all entries of $x$ are non-zero.

    We construct an increasing sequence of vectors $0 \le y^{(n)} \in \ell^{1}$ recursively as follows. 
    Set $y^{(1)} \coloneqq x$. 
    If $y^{(n)}$ has already been defined, choose an index $k_{n} \in \N$ such that the tail sum estimate 
    \[
        \sum_{k=k_{n}}^\infty y^{(n)}_{k_0} \le \frac{1}{2^{n}}
    \]
    holds. 
    We set 
    \begin{align*}
        y^{(n+1)}_{k} 
        \coloneqq 
        \begin{cases}
            y^{(n)}_{k}  \quad & \text{if } k < k_{n}, \\ 
            2y^{(n)}_{k} \quad & \text{if } k \ge k_{n},
        \end{cases}
    \end{align*}
    Then 
    \[
        \norm{y^{(n+1)} - y^{(n)}}_{1} \le \frac{1}{2^n}
    \]
    for each $n$.
    Therefore the increasing sequence $(y^{(n)})_{n \in \N}$ is Cauchy in $\ell^{1}$ and thus converges to a limit $y \in \ell^{1}$ satisfying $y \ge x$. 
    In particular, $y_{k} > 0$ for each index $k$. 
    Define $z_{k} \coloneqq x_{k}/y_{k}$ for each $k$. 

    For every $n \in \N$ and index $k$ with 
    $k \ge \max \{ k_{1}, \ldots , k_{n} \}$ one has 
    \[
        y_{k} \ge y^{(n+1)}_{k} \ge 2^n x_{k}
    \]
    and thus $\frac{1}{2^n} \ge z_{k}$. 
    So, indeed, $z \in c_0$.
\end{proof}

\begin{lemma}
    \label{lem:trace-class-order-intervals}
    Let $H$ be a complex Hilbert space and consider a positive trace class operator $b$ in $\mathcal{L}^{1}(H)$.
    \begin{enumerate}[\upshape (i)]
        \item 
        The order interval
        \[
            [0,b] \coloneqq \{a \in \mathcal{L}^{1}(H) \colon 0 \le a \le b \}
        \]
        is norm compact in $\mathcal{L}^{1}(H)$.

        \item 
        If $b$ is injective, then the linear span of $[0,b]$ is dense in $\mathcal{L}^{1}(H)$.
    \end{enumerate}
\end{lemma}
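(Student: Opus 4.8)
The plan is to prove the two parts separately: part~(i) by reducing trace-norm compactness to the operator-norm compactness already established in Lemma~\ref{lem:order-interval-compact}, using the sequence splitting of Lemma~\ref{lem:ell-p-split}; and part~(ii) by a direct spectral argument producing the matrix units of an eigenbasis inside the span of $[0,b]$. For part~(i), I would first reduce to the injective case. Since $0 \le a \le b$ forces $\ker b \subseteq \ker a$ (by Cauchy--Schwarz for the positive form $\langle a\,\cdot\,,\cdot\rangle$), every $a \in [0,b]$ is supported on $\overline{\operatorname{range} b}$, so I may assume $b$ is injective with strictly positive eigenvalues $(\lambda_j)_j \in \ell^1$ and orthonormal eigenbasis $(e_j)$. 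If $b$ has finite rank the interval lies in a finite-dimensional space and compactness is clear, so assume infinitely many eigenvalues and apply Lemma~\ref{lem:ell-p-split} with $p=1$ to write $\lambda_j = y_j z_j$ with $0 \le (y_j) \in \ell^1$ and $0 \le (z_j) \in c_0$. Letting $c$ and $k$ be the positive diagonal operators with eigenvalues $(y_j)$ and $(z_j)$ in the basis $(e_j)$, the operator $c$ is trace class (hence $c^{1/2}$ is Hilbert--Schmidt, $\norm{c^{1/2}}_2^2 = \Tr{c} < \infty$), $k$ is compact, and $b = c^{1/2} k c^{1/2}$.

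Next I would consider the map $\Psi \colon [0,k] \to \mathcal{L}^1(H)$, $\Psi(s) = c^{1/2} s c^{1/2}$, where $[0,k] = \{ s \in \Kp{H} \colon 0 \le s \le k \}$. It maps into $[0,b]$ because $0 \le c^{1/2} s c^{1/2} \le c^{1/2} k c^{1/2} = b$, and it is continuous from the operator norm on $[0,k]$ to the trace norm on $[0,b]$: by Hölder's inequality for Schatten norms,
\[
    \norm{c^{1/2}(s-s')c^{1/2}}_1 \le \norm{c^{1/2}}_2 \, \norm{s-s'} \, \norm{c^{1/2}}_2 = \Tr{c}\,\norm{s-s'} .
\]
Since $[0,k]$ is norm compact by Lemma~\ref{lem:order-interval-compact}, its continuous image $\Psi([0,k])$ is trace-norm compact, and it then suffices to show $\Psi$ is onto $[0,b]$. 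This surjectivity is the \emph{main obstacle}. Given $0 \le a \le b$, I would recover a preimage by ``dividing out'' $c^{1/2}$, i.e. by defining $s = c^{-1/2} a c^{-1/2}$ through the quadratic form $\xi \mapsto \langle a\, c^{-1/2}\xi,\, c^{-1/2}\xi\rangle$ on the dense domain $\operatorname{range} c^{1/2}$. The domination $a \le b = c^{1/2} k c^{1/2}$ shows this form is bounded by $\langle k\xi,\xi\rangle$, so $s$ extends to a bounded operator with $0 \le s \le k$; such an $s$ is automatically compact (an operator dominated by a positive compact operator factors as $s = k^{\gamma} w k^{\gamma}$ for bounded $w$ by \cite[Corollary~II.3.2.5]{blackadar:2006}, with $k^\gamma$ compact), and $\Psi(s) = a$ by construction. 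Care is needed because $c^{-1/2}$ is unbounded, so this computation must be carried out at the level of forms rather than operators.

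For part~(ii), with $b$ injective I have $b = \sum_j \lambda_j\,\langle\cdot,e_j\rangle e_j$ with all $\lambda_j > 0$ and $(e_j)$ an orthonormal basis of $H$. The plan is to show that every matrix unit $\langle\cdot,e_j\rangle e_i$ lies in $\operatorname{span}[0,b]$. For a unit vector $\xi$ in the linear span of finitely many $e_j$, the positive rank-one operator $\langle\cdot,\xi\rangle\xi$ is supported on a finite-dimensional reducing subspace of $b$ on which $b$ is bounded below by a strictly positive constant; hence $t\,\langle\cdot,\xi\rangle\xi \le b$ for a sufficiently small $t > 0$, so $\langle\cdot,\xi\rangle\xi \in \operatorname{span}[0,b]$. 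Applying this to the vectors $\xi = e_i + \mathrm{i}^{m} e_j$ and invoking the polarization identity $4\,\langle\cdot,e_j\rangle e_i = \sum_{m=0}^{3} \mathrm{i}^{m}\,\langle\cdot,\, e_i + \mathrm{i}^{m} e_j\rangle (e_i + \mathrm{i}^{m} e_j)$ expresses each $\langle\cdot,e_j\rangle e_i$ as a complex combination of operators in $[0,b]$, so all matrix units lie in $\operatorname{span}[0,b]$. Since finite linear combinations of the matrix units $\langle\cdot,e_j\rangle e_i$ are trace-norm dense in $\mathcal{L}^1(H)$ (the finite-rank operators are dense, and every finite-rank operator is approximated by truncating its vectors in the basis $(e_j)$), the span of $[0,b]$ is dense, as claimed.
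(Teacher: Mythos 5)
Your proof is correct, and although it rests on the same two auxiliary results as the paper's proof (the norm compactness of order intervals in $\Kp{H}$ from Lemma~\ref{lem:order-interval-compact}, and the splitting Lemma~\ref{lem:ell-p-split}), the way you assemble them in part~(i) is genuinely different. The paper takes a sequence $(a_n)$ in $[0,b]$, writes $a_n = b^{1/2}c_nb^{1/2}$ with $\norm{c_n}\le 1$ (via \cite[Corollary~II.3.2.5]{blackadar:2006} together with a weak-operator-topology compactness argument), splits $b^{1/2}=yz$ with $y$ Hilbert--Schmidt and $z$ compact, extracts a norm-convergent subsequence of $(zc_nz)$ inside the compact interval $[0,z^2]$, and finally needs norm closedness of $[0,b]$ to conclude. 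You instead split $b$ itself as $c^{1/2}kc^{1/2}$ and exhibit $[0,b]$ as the image of the compact set $[0,k]$ under the continuous map $s\mapsto c^{1/2}sc^{1/2}$; the essential extra step is surjectivity, which you obtain by the quadratic-form argument that ``divides'' $a\le b$ by the unbounded operator $c^{-1/2}$, and this argument is doing real work: it yields not merely a bounded $s$ with $c^{1/2}sc^{1/2}=a$ (which is all a direct appeal to Blackadar's factorization would give) but the crucial bound $s\le k$, which is what places $s$ in the compact set $[0,k]$ -- a bounded-ball bound would be useless since balls of $\Kp{H}$ are not norm compact. Your route buys a cleaner global picture (a continuous surjection from a compact set, so closedness of $[0,b]$ is never needed), at the price of handling $c^{-1/2}$ carefully, which you correctly do at the level of sesquilinear forms. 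Part~(ii) is in substance the paper's argument in different packaging: the paper dominates $(P_nh)\otimes(P_nh)\le \frac{\norm{h}^2}{\lambda_n}\,b$ and lets $n\to\infty$ for each fixed $h\in H$, while you dominate rank-one operators built from finitely many eigenvectors and then invoke polarization to capture the matrix units, followed by density of their finite combinations; both hinge on the same observation that $b$ is bounded below on its finite-dimensional spectral subspaces. One cosmetic slip: you state the domination claim for \emph{unit} vectors $\xi$ but apply it to $\xi=e_i+\mathrm{i}^m e_j$ of norm $\sqrt{2}$; this is harmless, as the claim is invariant under scaling of $\xi$.
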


\begin{proof}
    (i)
    We start with the following preliminary observation.
    For every $a \in [0,b]$ and every $\gamma \in \left]0,1/2\right[$ there exists, again according to \cite[Corollary~II.3.2.5]{blackadar:2006}, an operator $c_\gamma \in \Kp{H}_{+}$ such that $a = b^\gamma c b^\gamma$ and $\norm{c} \le \norm{b^{1-2\gamma}} = \norm{b}^{1-2\gamma}$. 
    By using the compactness of closed balls in $\LH$ with respect to the weak operator topology we can thus find an operator $c \in \LH_{+}$ such that $a = b^{1/2} c b^{1/2}$ and $\norm{c} \le 1$. 
    (See also \cite[II.3.2.4]{blackadar:2006} for such an observation in the setting of von Neumann algebras).

    Now let $(a_{n})$ be a sequence in $[0,b]$. 
    According to our preliminary observation we can find a sequence $(c_{n}) \in \LH_{+}$ such that $a_{n} = b^{1/2} c_{n} b^{1/2}$ and $\norm{c_{n}} \le 1$ for each $n$. 
    The operator $b^{1/2}$ is Hilbert-Schmidt, so the sequence of its eigenvalues is in $\ell^{2}$ and we can thus apply Lemma~\ref{lem:ell-p-split} to this sequence. 
    Thus we obtain a Hilbert-Schmidt operator $y \ge 0$ and a compact operator $z \ge 0$ such that $b^{1/2} = yz = zy$ and hence, 
    $a_{n} = yz c_{n} zy$ for each index $n$. 
    
    The sequence $(zc_{n}z)$ is contained in the order interval $[0,z^2]$ in the space $\Kp{H}$.
    So according to Lemma~\ref{lem:order-interval-compact} it has a subsequence $(z c_{n_{k}} z)$ which norm converges to an element of $\Kp{H}$.
    Finally, observe that $z \mapsto yzy$ is a continuous mapping from $\Kp{H}_{+}$ to $\mathcal{L}^1(H)_{+}$ since $y$ is a Hilbert-Schmidt operator. 
    Hence, $(a_{n_{k}})$ converges in $\mathcal{L}^{1}(H)$. 
    Since the order interval $[0,b]$ is norm closed in $\mathcal{L}^1(H)$ we thus obtain the claimed compactness.

    (ii)
    It suffices to show that for every $h \in H$ the finite rank operator $h \otimes h$ given by $(h \otimes h) \xi = (\xi | h)h$ for all $\xi \in H$ is in the closure of the span of $[0,b]$; so fix $h \in H$. 

    Since the compact operator $b$ is injective, the space $H$ is separable. 
    If $H$ is finite-dimensional the claim is clear, so assume now that $H$ is infinite-dimensional.
    Then there exists an orthonormal basis $(\xi_{n})_{n \in \N}$ of $H$ consisting of eigenvectors of $b$ such that the corresponding eigenvalues $\lambda_{n}$ decrease to $0$ and note that they are all non-zero since $b$ is injective.

    For every $n \in \N$ let $P_{n}$ denote the orthogonal projection onto the span of $\{\xi_{1}, \dots, \xi_{n}\}$. 
    Then 
    \begin{align*}
        0 
        \le 
        P_{n}(h \otimes h)P_{n} 
        \le 
        \norm{h}^2 P_{n} 
        \le 
        \frac{\norm{h}^2}{\lambda_{n}} b .
    \end{align*}
    Hence, the operator $P_{n}(h \otimes h)P_{n} = (P_{n} h) \otimes (P_{n} h)$ belongs to the span of $[0,b]$. 
    Moreover, a brief computation shows that $(P_{n} h) \otimes (P_{n} h)$ converges to $h \otimes h$ with respect to the trace norm as $n \to \infty$, so the proof is complete.
\end{proof}

A result similar to Lemma~\ref{lem:trace-class-order-intervals}(ii) also holds in the space $\Kp{H}$, see \cite[Examples~2.15(i) and~2.16]{glueck:2020}.
Now we can prove the main result of this section.

\begin{proof}[Proof of Theorem~\ref{thm:trace-class}]
    The implication (a) $\implies$ (b) is a special case of Proposition~\ref{prop:isometry-unital} and (b) $\implies$ (c) is obvious. 

    (c) $\implies$ (a):    
    Since $T$ is doubly power bounded, we have $0 \not\in \Sp{T}$. 
    The pre-adjoint $T_{*}$ is also doubly power bounded, so it it suffices to show that its inverse $T_{*}^{-1}$ is positive. 
    
    For every $a \in [0,b]$ the positivity of $T_{*}$ and the inequality $T_{*} b \le b$ imply that the orbit $\{T_{*}^n a \colon n \ge 0\}$ is contained in $[0,b]$. 
    The order interval $[0,b]$ is norm compact in $\mathcal{L}^1(H)$ according to Lemma~\ref{lem:trace-class-order-intervals}(i), so the orbit is relatively norm compact in $\mathcal{L}^1(H)$. 
    Since $T_{*}$ is power bounded and the span of $[0,b]$ is dense in $\mathcal{L}^1(H)$ according to Lemma~\ref{lem:trace-class-order-intervals}(ii), it follows that the orbit $\{T_{*}^n a \colon n \ge 0\}$ is relatively norm compact in $\mathcal{L}^1(H)$ for every $a \in \mathcal{L}^1(H)$, see \cite[Corollary~A.5]{engel:2000}. 

    So $T_{*}$ is almost periodic and hence, the Jacobs-de Leeuw-Glicksberg decomposition from Theorem~\ref{thm:jdlg} is applicable to the operator $T_{*}$. 
    For $T_{*}$ doubly power bounded, part~(iv) of the theorem shows that $\ker P = \{0\}$; so $P = \operatorname{id}$.
    Now let $1 \le n_{k} \to \infty$ be a net of integers as in part~(iii) of the same theorem. 
    Then it follows that $T_*^{n_{k} - 1} \to T_*^{-1}$ strongly as $k \to \infty$ and hence, $T_*^{-1}$ is positive, as claimed. 
\end{proof}

\begin{remark}
    The Jacobs-de Leeuw-Glicksberg decomposition can also be used to show algebraic properties of the closed span of the eigenvectors of an operator that belong to the unimodular eigenvalues. 
    We illustrate this briefly for one concrete situation. 
    
    Let $H$ be a complex Hilbert space and let $T \colon  \Kp{H} \to \Kp{H}$ be a positive and power bounded linear operator. 
    Assume moreover that there exists an injective operator $b \in \Kp{H}$ such that $Tb = b$. 
    Since the order interval $[0,b]$ is norm compact in $\Kp{H}$ by Lemma~\ref{lem:order-interval-compact} and since the linear span of $[0,b]$ is dense in $\Kp{H}$ according to \cite[Examples~2.15(i) and~2.16]{glueck:2020}, the same argument as in the proof of Theorem~\ref{thm:trace-class} shows that $T$ is almost periodic. 
    Hence, the Jacobs-de Leeuw-Glicksberg decomposition from Theorem~\ref{thm:jdlg} can be applied and gives a positive projection $P\colon \Kp{H} \to \Kp{H}$ as stated in the theorem. 

    For self-adjoint elements $x$, $y$ in the range of $P$ one can define the binary operation $x \star y \coloneqq P(x \circ y)$, 
    which defines a Jordan product on the range of $P$ (see  \cite[Theorem~1.4]{effros-stoermer:1979}).     
    Similarly, if $T$ is $2$-positive, then so is $P$ and hence $(x,y) \mapsto P(xy)$ defines a product that turns the range of $P$ into a \CA-algebra, see \cite[Theorem~1.3.13]{blecher:2004}.
    If $P$ is faithful, then the assumption \emph{Schwarz operator} is sufficient.

    If $P$ is faithful, then in both cases the algebraic operators defined on the range of $P$ coincide with the same operations on $\BA$.
\end{remark}

\section{Positive inverses in infinite dimensions III: the doubly power bounded case on atomic von Neumann algebras}
    \label{sec:atomic}

In this last section we extend our results from Section~\ref{sec:ap-operators-on-c-star-algebras} to von Neumann algebras that are \emph{atomic}, which means that every projection in $\BA$ dominates a minimal projection \cite[Definition~III.5.9.]{takesaki:1979}. 
More on atomic von Neumann algebras can be found in \cite[IV.2.2.1--IV.2.2.4]{blackadar:2006} and \cite[Exercise V.2.8]{takesaki:1979}.

Recall that a bounded linear form $\phi$ on a von Neumann algebra $\BA$ is called \emph{normal} if $\phi \in \BA_{*}$, where $\BA_{*}$ is the predual of $\BA$.
If such a $\phi$ is positive, we denote by $\supp{\phi}$ the support projection of $\phi$, \ie $\supp{\phi} = 1 - \sup \mathcal{P}$ where 
\[
    \mathcal{P} = \{ p \in \BA_{h} \colon \text{ $p$ projection and $\phi(p)=0$} \}.
\]
We refer to \cite[5.15]{stratila:1979} for various properties of $\supp{\phi}$. 
The following result generalizes Lemma~\ref{lem:trace-class-order-intervals} to preduals of atomic von Neumann algebras. 

\begin{lemma}
    \label{lem:atomic-order-intervals}
    Let $\BA$ be an atomic von Neumann algebra and let $0 \le \phi \in \BA_*$ by a normal state.
    \begin{enumerate}[\upshape (i)]
        \item 
        If $\BA$ is atomic, then the order interval 
        \begin{align*}
            [0,\phi] \coloneqq \{\psi \in \BA_* \colon 0 \le \psi \le \varphi\}
        \end{align*}
        is norm compact in $\BA_*$.

        \item 
        If $\phi$ is faithful, then the face generated by $\phi$ in the positive cone $\BA_{*}^{+}$, \ie the set
        \[
            F_{\phi} \coloneqq \bigcup_{n} n[0,\phi],
        \]
        is dense in $\BA_{*}^{+}$. 
        Hence, the linear span of $[0, \phi]$ is dense in $\BA_*$.
    \end{enumerate}
\end{lemma}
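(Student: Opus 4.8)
The plan is to reduce both assertions to the single-factor case already settled in Lemma~\ref{lem:trace-class-order-intervals}, using the structure theory of atomic von Neumann algebras. Since $\BA$ is atomic, it is $*$-isomorphic to a direct sum $\bigoplus_{j}^{\ell^\infty}\mathcal{L}(H_j)$ of type~I factors (in the von Neumann sense, \ie bounded families with the supremum norm; see \cite[IV.2.2.1--IV.2.2.4]{blackadar:2006}), and accordingly its predual is the $\ell^1$-direct sum $\BA_* = \bigoplus_{j}^{\ell^1}\mathcal{L}^1(H_j)$. Under this identification the normal state $\phi$ corresponds to a family $(b_j)_j$ of positive trace class operators $b_j \in \mathcal{L}^1(H_j)_+$ with $\sum_j \norm{b_j}_1 = \phi(\1) = 1$, and an element $\psi = (c_j)_j \in \BA_*$ satisfies $0 \le \psi \le \phi$ if and only if $0 \le c_j \le b_j$ for every $j$. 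Hence $[0,\phi]$ is exactly the product $\prod_j [0, b_j]$ sitting inside the $\ell^1$-sum; in particular only countably many $b_j$ are nonzero.

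For part~(i), each factorwise interval $[0, b_j]$ is norm compact in $\mathcal{L}^1(H_j)$ by Lemma~\ref{lem:trace-class-order-intervals}(i). As $[0,\phi]$ is norm closed in the Banach space $\BA_*$, it suffices to prove that it is totally bounded. Given $\epsilon > 0$, the summability $\sum_j \norm{b_j}_1 < \infty$ allows me to choose a finite index set $F$ with $\sum_{j\notin F}\norm{b_j}_1 < \epsilon/2$. The coordinate projection onto the summands indexed by $F$ carries $[0,\phi]$ onto the finite product $\prod_{j\in F}[0,b_j]$, which is compact and hence admits a finite $\epsilon/2$-net; since every $\psi = (c_j)_j \in [0,\phi]$ differs in norm from its image under this projection by at most $\sum_{j\notin F}\norm{c_j}_1 \le \sum_{j\notin F}\norm{b_j}_1 < \epsilon/2$, extending that net by zero outside $F$ yields a finite $\epsilon$-net for $[0,\phi]$. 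Thus $[0,\phi]$ is totally bounded and therefore compact.

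For part~(ii), faithfulness of $\phi$ forces every $b_j$ to be injective: if some $b_{j_0}$ had a nonzero kernel vector $\xi$, then the rank-one projection onto $\C\xi$, regarded as an element of $\BA$, would be a nonzero positive operator annihilated by $\phi$. With each $b_j$ injective, the proof of Lemma~\ref{lem:trace-class-order-intervals}(ii) applies factorwise and in fact exhibits its approximants $P_n(h\otimes h)P_n$ inside $\bigcup_n n[0,b_j]$; since this face is a convex cone and the positive finite rank operators are trace-norm dense in $\mathcal{L}^1(H_j)_+$, it follows that $\bigcup_n n[0,b_j]$ is dense in $\mathcal{L}^1(H_j)_+$. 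Now let $0 \le \psi = (d_j)_j \in \BA_*^+$ and $\epsilon > 0$. Truncating to a finite set $F$ with $\sum_{j\notin F}\norm{d_j}_1 < \epsilon/2$ and approximating, for each $j \in F$, the component $d_j$ within $\epsilon/(2\abs{F})$ by some $e_j$ with $0 \le e_j \le N_j b_j$, I put $N = \max_{j\in F} N_j$ and form the finitely supported $\eta = (e_j)_{j\in F}$. Then $0 \le \eta \le N\phi$, so $\eta \in F_\phi$, and $\norm{\psi - \eta}_* < \epsilon$; hence $F_\phi$ is dense in $\BA_*^+$. The density of $\operatorname{span}[0,\phi]$ in $\BA_*$ then follows, since $\BA_*$ is the complex span of $\BA_*^+$ (every normal functional being a complex combination of positive ones via its real, imaginary and Jordan parts) and $\operatorname{span}[0,\phi]$ contains $F_\phi - F_\phi$ together with its imaginary multiples.

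I expect the genuine difficulty to lie in part~(i): although the individual order intervals are compact, they are spread across infinitely many summands of an $\ell^1$-direct sum, so the compactness of $[0,\phi]$ is not formal and rests precisely on the uniform smallness of the tails $\sum_{j\notin F}\norm{b_j}_1$ guaranteed by $\phi$ being a finite normal state. The remaining work is the bookkeeping needed to transport the order, the faithfulness, and the trace norm correctly through the direct sum identification.
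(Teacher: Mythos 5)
Your proof is correct, but it takes a genuinely different route from the paper's. The paper never invokes the structure theorem for atomic von Neumann algebras: for part~(i) it observes that the order interval $[0,\phi]$ is weakly compact, extracts from any sequence a weakly convergent subsequence via Eberlein's theorem, and then uses \cite[Corollary~III.5.11]{takesaki:1979} --- this is precisely where atomicity enters --- to upgrade weak convergence of positive normal functionals to norm convergence; for part~(ii) it quotes Effros' description of the closed face, $\overline{F_{\phi}} = \{ 0 \leq \psi \in \BA_{*} \colon \supp{\psi} \leq \supp{\phi} \}$, so that faithfulness ($\supp{\phi} = \1$) immediately gives density in $\BA_{*}^{+}$. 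You instead reduce both parts to the type~I factor case via the decomposition of $\BA$ as an $\ell^\infty$-direct sum of algebras $\mathcal{L}(H_j)$, with predual the $\ell^1$-direct sum of the spaces $\mathcal{L}^1(H_j)$, and then run Lemma~\ref{lem:trace-class-order-intervals} componentwise together with an $\ell^1$ tail estimate; this is sound, including the subtle points (the identification $[0,\phi] = \prod_j [0,b_j]$, norm-closedness plus total boundedness in~(i), and injectivity of each $b_j$ from faithfulness in~(ii)). What your route buys: it is self-contained modulo Section~\ref{sec:ap-operators-on-c-star-algebras} and the structure theorem, and it makes transparent that Theorem~\ref{thm:atomic-class} really is a direct-sum extension of Theorem~\ref{thm:trace-class}; in particular, your observation that the approximants $P_n(h \otimes h)P_n$ in the proof of Lemma~\ref{lem:trace-class-order-intervals}(ii) lie in the face $\bigcup_n n[0,b_j]$ --- giving cone density rather than merely the span density stated there --- is a refinement your gluing argument genuinely needs, and you justify it correctly. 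What the paper's route buys: it is much shorter, avoids the structure theorem entirely, and its proof of~(ii) uses no atomicity at all, so that assertion holds verbatim on every von Neumann algebra admitting a faithful normal state --- a generality the factorwise argument cannot deliver.
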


\begin{proof}
    (i) 
    The order interval $[0,\phi]$ is weakly compact, hence every sequence $(\psi_{n})$ has a weakly convergent subsequence by Eberlein's theorem.
    But if $\psi_{n_{k}} \to \psi$ weakly, then $\lim_{k}\norm{\psi_{n_{k}} - \psi} = 0 $ by \cite[Corollary~III.5.11]{takesaki:1979}.

    (ii)
    The closure of the face $F_\varphi$ is given by
    \[
        \overline{F_{\phi}} = \{ 0 \leq \psi \in \BA_{*} \colon \supp{\psi} \leq \supp{\phi} \},
    \]
    see \cite[Lemma~4.1]{effros:1963}.
    Hence, if $\phi$ acts faithfully on $\BA$, then $\overline{F_{\phi}}$ equals  in the predual $\BA_*$ since $\supp{\phi}=\1$.
\end{proof}

Let $T$ be a positive operator on a von Neumann algebra $\BA$ and consider its adjoint operator $T'$ on the dual space of $\BA$.
If $T'\phi \leq \phi$ for a normal state $0 \le \phi \in \BA_*$, then $T'$ leaves the face $F_{\phi}$ invariant and if $\phi$ is faithful, it thus follows that $T'\BA_{*} \subseteq \BA_{*}$.
Therefore the restriction of $T'$ onto $\BA_{*} $ defines a positive operator $T_{*}$ which leaves the order interval $[0,\phi]$ invariant.

Now we can extend Theorem~\ref{thm:trace-class} to atomic von Neumann algebras.

\begin{theorem}
    \label{thm:atomic-class}
    Let $\BA$ be a von Neumann algebra and let $0 \le \phi \in \BA_{*}$ be a faithful and normal state on $\BA$. 
    Let $T \colon \BA \to \BA$ be a positive and unital linear operator such that $T'\phi \leq \phi$. 
    Then the following are equivalent.
    \begin{enumerate}[\upshape(a)]
        \item 
        The operator $T$ is invertible and its inverse operator $T^{-1}$ is positive.

        \item 
        The operator $T$ is a bijective isometry.
        
        \item 
        The operator $T$ is doubly power bounded.
    \end{enumerate}
\end{theorem}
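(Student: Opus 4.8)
The plan is to run the same three-implication scheme as in the proof of Theorem~\ref{thm:trace-class}, with Lemma~\ref{lem:atomic-order-intervals} taking over the role of Lemma~\ref{lem:trace-class-order-intervals}. The implication (a)$\implies$(b) is again a direct instance of Proposition~\ref{prop:isometry-unital}: a von Neumann algebra is a unital \CA-algebra, invertibility of $T$ gives $0 \notin \Sp{T}$, and positivity of both $T$ and $T^{-1}$ together with unitality forces $T$ to be a bijective isometry. The implication (b)$\implies$(c) is immediate, since a surjective isometry satisfies $\norm{T^{n}} = 1$ for every $n \in \Z$. Hence the whole substance lies in (c)$\implies$(a).

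For (c)$\implies$(a) I would first pass to the predual. Since $T$ is doubly power bounded we have $0 \notin \Sp{T}$, so $T$ is bijective. By the paragraph preceding the theorem, the hypothesis $T'\phi \le \phi$ together with faithfulness of $\phi$ yields $T'\BA_{*} \subseteq \BA_{*}$, so that $T$ admits the pre-adjoint $T_{*} \coloneqq T'|_{\BA_{*}}$; this is a positive operator on $\BA_{*}$ with $(T_{*})' = T$ that leaves the order interval $[0,\phi]$ invariant (from $T_{*}\phi \le \phi$ and positivity). Because $(T_{*})' = T$ is bijective, $T_{*}$ is itself bijective, and from $\big((T_{*}^{-1})^{n}\big)' = T^{-n}$ one reads off $\norm{T_{*}^{-n}} = \norm{T^{-n}}$; combined with $\norm{T_{*}^{n}} = \norm{T^{n}}$ this shows that $T_{*}$ is doubly power bounded, exactly as in the proof of Theorem~\ref{thm:trace-class}.

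Next I would establish almost periodicity of $T_{*}$. For $a \in [0,\phi]$ the orbit $\{T_{*}^{n} a \colon n \ge 0\}$ stays inside $[0,\phi]$, which is norm compact by Lemma~\ref{lem:atomic-order-intervals}(i); this is the one step that genuinely uses atomicity of $\BA$. Since $\phi$ is faithful, Lemma~\ref{lem:atomic-order-intervals}(ii) gives that the linear span of $[0,\phi]$ is dense in $\BA_{*}$, and power boundedness of $T_{*}$ then propagates relative norm compactness of orbits from this dense set to all of $\BA_{*}$ (see \cite[Corollary~A.5]{engel:2000}). Thus $T_{*}$ is almost periodic and Theorem~\ref{thm:jdlg} applies. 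Double power boundedness forces $\ker P = \{0\}$ by part~(iv) — if $T_{*}^{n}\psi \to 0$ then $\norm{\psi} \le \norm{T_{*}^{-n}}\,\norm{T_{*}^{n}\psi} \to 0$ — hence $P = \operatorname{id}$. Taking the net $1 \le n_{k} \to \infty$ from part~(iii) with $T_{*}^{n_{k}} \to \operatorname{id}$ strongly gives $T_{*}^{n_{k}-1} = T_{*}^{-1}T_{*}^{n_{k}} \to T_{*}^{-1}$ strongly; since each $T_{*}^{n_{k}-1}$ is positive and the cone $\BA_{*}^{+}$ is norm closed, $T_{*}^{-1}$ is positive, and therefore so is $T^{-1} = (T_{*}^{-1})'$.

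I expect no serious obstacle in the theorem's proof itself, because the only genuinely new analytic ingredient beyond Theorem~\ref{thm:trace-class} is packaged into Lemma~\ref{lem:atomic-order-intervals}. The points demanding care are purely the predual bookkeeping — checking that $T_{*}$ is well defined, positive, bijective and doubly power bounded, and that $(T_{*}^{-1})' = T^{-1}$ — and keeping track of exactly where atomicity enters, namely through the norm compactness of $[0,\phi]$ in Lemma~\ref{lem:atomic-order-intervals}(i), which is the standing hypothesis of this section.
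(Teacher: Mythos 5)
Your proof is correct and follows exactly the paper's intended argument: the paper itself states that the proof of Theorem~\ref{thm:atomic-class} is literally that of Theorem~\ref{thm:trace-class} with Lemma~\ref{lem:trace-class-order-intervals} replaced by Lemma~\ref{lem:atomic-order-intervals}, which is precisely your scheme. Your explicit verification of the predual bookkeeping (bijectivity and double power boundedness of $T_{*}$, and $(T_{*}^{-1})' = T^{-1}$) fills in details the paper leaves implicit, but does not change the route.
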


The proof is literally the same as the proof of Theorem~\ref{thm:trace-class} -- one just has to replace the references to Lemma~\ref{lem:trace-class-order-intervals} by references to Lemma~\ref{lem:atomic-order-intervals}. 

\begin{remark}
    In the situation of Theorem~\ref{thm:atomic-class} the von Neumman algebra $\BA$ is $\sigma$-finite since $\phi$ is faithful. 
    Moreover, since $\BA$ is atomic one has 
    \[
        \BA = \bigoplus_{k\in N} \mathcal{L}(H_{k}),
    \]
    where $N$ is a countable set and $H_{k}$ is a Hilbert space for every $k\in N$ which can be infinite-dimensional (see \cite[Theorem~IV.2.2.2]{blackadar:2006}).
\end{remark}

We find it worthwhile to state the commutative case explicitly as a corollary.

\begin{corollary}
    \label{cor:commutative}
    Let $T \colon \ell^\infty \to \ell^\infty$ be a positive linear operator satisfying $T \1 = \1$. 
    Assume that there exists a vector $b \in \ell^{1}$ such that $b_{k} > 0$ for each index $k$ and such that $T'b \le b$.
    
    Then $T$ is invertible with positive inverse $T^{-1}$ if and only if $T$ is a bijective isometry if and only if $T$ is doubly power bounded. 
\end{corollary}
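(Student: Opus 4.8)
The plan is to obtain Corollary~\ref{cor:commutative} as the special case $\BA = \ell^\infty$ of Theorem~\ref{thm:atomic-class}. First I would note that $\ell^\infty = \ell^\infty(\N)$ is a commutative von Neumann algebra with predual $\ell^1$, and that it is atomic: every non-zero projection in $\ell^\infty$ is the indicator $\1_{S}$ of a non-empty set $S \subseteq \N$, and $\1_{S}$ dominates the minimal projection $\1_{\{k\}}$ for any $k \in S$. This matches the standing hypothesis of the lemmas underlying Theorem~\ref{thm:atomic-class}.

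Next I would convert the vector $b$ into a faithful normal state. Rescaling $b$ by the positive constant $1/\norm{b}_{1}$ leaves both the strict positivity of the entries and the inequality $T'b \le b$ intact, so I may assume $\sum_{k} b_{k} = 1$. Then $\phi(x) \coloneqq \sum_{k} b_{k}\,x(k)$ defines a normal state on $\ell^\infty$, and its faithfulness is precisely the hypothesis that $b_{k} > 0$ for every $k$: for a projection $\1_{S}$ one has $\phi(\1_{S}) = \sum_{k \in S} b_{k}$, which vanishes only for $S = \emptyset$, so the support projection of $\phi$ equals $\1$.

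Under this identification the hypothesis $T'b \le b$ is exactly the condition $T'\phi \le \phi$ required in Theorem~\ref{thm:atomic-class}. Here I would remark that $\ell^1$ is an order ideal in the full dual $(\ell^\infty)'$, so the positivity of $T'$ together with $0 \le T'b \le b \in \ell^1$ already places $T'b$ in the predual; concretely, $T'b$ is represented by a sequence $(c_{k}) \in \ell^1$ with $0 \le c_{k} \le b_{k}$ for each $k$. The remaining hypotheses, positivity and unitality of $T$, coincide verbatim, so the equivalence of (a), (b) and (c) follows at once from Theorem~\ref{thm:atomic-class}.

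I do not expect a substantive obstacle, since the corollary is a direct specialization; the only point that requires genuine care is the dictionary between the \emph{dual-vector} formulation in terms of $b \in \ell^1$ and the \emph{normal-state} formulation in terms of $\phi \in \BA_{*}$, in particular the observation that faithfulness of $\phi$ is equivalent to strict positivity of every entry of $b$.
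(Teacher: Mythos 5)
Your proposal is correct and follows essentially the same route as the paper, which likewise obtains Corollary~\ref{cor:commutative} by applying Theorem~\ref{thm:atomic-class} to the atomic von Neumann algebra $\ell^\infty$; your extra care with the dictionary between $b \in \ell^1$ and a faithful normal state (normalization, support projection, and the ideal property placing $T'b$ in the predual) just makes explicit what the paper leaves to the reader.
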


This follows readily if one applies Theorem~\ref{thm:atomic-class} to the atomic von Neumann algebra $\ell^\infty$.
Of course one can also prove the corollary directly, without refering to von Neumann algebra theory, by copying the proof of Theorem~\ref{thm:trace-class} and using the well-known -- and easy to prove -- fact that order intervals in $\ell^{1}$ are norm compact.

Let us stress again that we do not know the answer to Open Problem~\ref{probl:without-doubly-pb} in the commutative case, either. 
Example~\ref{exa:perturbed-shift} does not provide a counterexample since the pre-adjoint $T_{*}$ on $\ell^{1}$ does not have a strictly positive fixed vector 
(and if it had one, then the double power boundedness of $T$ would imply that $T^{-1}$ is positive, as we have just pointed out).

\bibliographystyle{plain}
\bibliography{automorphism}

\end{document}